\keywords{Cayley linear--time computable group,
	multi--tape Turing machine,
	wreath product,  
	Thompson's group}
\theoremstyle{plain} 
\begin{document}

\title[Cayley Linear--Time Computable Groups]{Cayley Linear--Time Computable Groups}

\author[P.~Kruengthomya]{Prohrak Kruengthomya}	
\address{Department of Mathematics, Faculty of Science, Mahidol University, Bangkok, 10400, Thailand}	
\email{prohrakju@gmail.com}  

\author[D.~Berdinsky]{Dmitry Berdinsky}	
\address{Department of Mathematics, Faculty of Science, Mahidol University and Centre of Excellence in Mathematics, Commission on Higher Education, Bangkok, 10400, Thailand}	
\email{berdinsky@gmail.com}  






\begin{abstract}
  \noindent This paper looks at 
  the class 
  of groups 
  admitting normal 
  forms for which the right 
  multiplication by 
  a group element is 
  computed in linear time
  on a multi--tape Turing 
  machine.  
  We show that the 
  groups 
  $\mathbb{Z}_2 \wr \mathbb{Z}^2$, 
  $\mathbb{Z}_2 \wr 
  \mathbb{F}_2$ and 
  Thompson's group 
  $F$ have normal forms 
  for which the right 
  multiplication by 
  a group element is 
  computed in linear 
  time on a 
  $2$--tape Turing machine. 
  This refines the 
  results 
  previously 
  established by Elder 
  and the authors
  that these groups 
  are Cayley polynomial--time 
  computable.
  
\end{abstract}

\maketitle


\section*{Introduction}

 Extensions of the notion of an automatic group 
introduced by Thurston and others \cite{Epsteinbook}
have been studied by different researchers. 
One of the extensions is the notion of  
a Cayley automatic group introduced by 
Kharlampovich, Khoussainov and Miasnikov \cite{KKM11}.
In their approach a normal form 
is defined by a bijection between a regular 
language and a group 
such that the right multiplication by a group element 
is recognized by a two--tape synchronous 
automaton.  
Elder and the authors looked at the further extension
of Cayley automatic groups allowing the language
of normal forms to be arbitrary (though it is always 
recursively enumerable \cite[Theorem~3]{BEK21}) but requiring 
the right multiplication by a group 
element to be computed by an automatic function 
(a function that can be computed by a two--tape 
synchronous automaton).
This extension is referred to as 
Cayley position--faithful (one--tape) 
linear--time
computable groups \cite[Definition~3]{BEK21}. 
These groups admit 
quasigeodesic normal forms 
(see Definition \ref{quasigeodesic_def})
for which the right multiplication 
by a group element is computed 
in linear time (on a position--faithful one--tape Turing
machine) and the normal form is computed 
in quadratic time 
\cite[Theorem~2]{BEK21}.  

In this paper we look at the groups 
admitting normal forms for which 
the right multiplication by a group element
is computed in linear time on a multi--tape
Turing machine (we refer to such groups as 
\emph{Cayley linear--time computable}). These normal forms are 
not necessarily quasigeodesic, see, e.g.,
the normal form of $\mathbb{Z}_2 \wr \mathbb{Z}^2$ 
considered in Section \ref{section_Z2wrZ2}. 
However, if such normal form is quasigeodesic, 
then it is computed in quadratic time  
(see Theorem \ref{quadratic_time_nf}), thus, 
fully retaining the basic algorithmic properties 
of normal forms for Cayley automatic groups: computability of the right multiplication by a group element in linear time and normal form in quadratic time.  
Cayley linear--time computable groups form a 
subset of Cayley polynomial--time 
computable groups introduced in 
\cite[Definition~5]{BEK21}, but clearly include 
all Cayley position--faithful (one--tape) 
linear--time
computable groups. 
We show that the groups 
$\mathbb{Z}_2 \wr \mathbb{Z}^2$, 
$\mathbb{Z}_2 \wr \mathbb{F}_2$ and Thompson's 
group $F$ are Cayley linear--time computable 
(on a $2$--tape Turing machine)
which refines the previous claims 
that these groups are Cayley polynomial--time 
computable \cite{BEK21}.   
To show that these three groups are 
Cayley linear--time computable we use the normal 
forms previously studied by the second author 
and Khoussainov for groups 
$\mathbb{Z}_2 \wr \mathbb{Z}^2$ and 
$\mathbb{Z}_2 \wr \mathbb{F}_2$ \cite{BK15} and Elder and Taback for Thompson's group $F$ \cite{ElderTabackThompson}. We note that 
\cite[Theorems~5,\,8]{BK15} and \cite[Theorem~3.6]{ElderTabackThompson} 
showing that $\mathbb{Z}_2 \wr \mathbb{F}_2$, 
$\mathbb{Z}_2 \wr \mathbb{Z}^2$ and Thompson's 
group $F$ are context--free, indexed and 
deterministic non--blind 1--counter graph automatic, respectively, do not imply that the right multiplications
by a group element for the normal forms considered in these groups are computed in linear time on a $2$--tape Turing machine. 
The latter requires careful verification that is  
done in this paper.    

Several researchers studied extensions 
of automatic groups utilizing 
different  
computational models.
Bridson and Gilman 
considered an extension of 
asynchronously automatic groups
using indexed languages \cite{BridsonGilman1996}. 
Baumslag, Shapiro and Short  extended the 
notion of an automatic group  
based on parallel computations by 
pushdown automata \cite{baumslagshapiro}.  
Brittenham and Hermiller  introduced autostackable groups which also extends the notion 
of an automatic group  
\cite{BrittenhamHermillerHolt14}. 
Elder and Taback introduced  
$\mathcal{C}$--graph automatic groups 
extending Cayley automatic groups 
and studied them for different classes
of languages $\mathcal{C}$ \cite{ElderTabackCgraph}.  
Jain, Khoussainov and Stephan introduced 
the class of a semiautomatic groups \cite{semiautomatic18} which generalizes the notion of a Cayley automatic group. 
Jain,  Moldagaliyev, Stephan and Tran studied extensions of Cayley automatic groups using transducers and tree automata \cite{lamplighter22}.

The paper is organized as follows. 
In Section \ref{section_preliminaries} we introduce 
the notion of a Cayley $k$--tape linear--time 
computable group. In Sections 
\ref{section_Z2wrZ2}, \ref{section_Z2wrF2} and 
\ref{section_F} we show that the wreath products 
$\mathbb{Z}_2 \wr \mathbb{Z}^2$, 
$\mathbb{Z}_2 \wr \mathbb{F}_2$ and 
Thompson's group $F$, respectively, are Cayley 
$2$--tape linear--time computable.   
Section \ref{conclusion_section} concludes the paper.  

\section{Preliminaries}
\label{section_preliminaries} 

In this section we introduce the notion of 
a Cayley linear--time computable group.  
We start with defining a basic concept
underlying this notion -- a \emph{function computed on a
    $k$--tape Turing machine in linear time}, where 
    $k>1$.    
\begin{defi}
	\label{pf_linear_time}
	
	A (position--faithful) 
	$k$--tape Turing machine is a 
	Turing machine with 
	$k$ semi--infinite tapes   
	for each of which the leftmost position contains the special symbol $\boxplus$ 
	which only occurs at this position and cannot be modified. 
	We denote by $\boxdot$ a special blank symbol, by $\Sigma$ the input alphabet for which $\Sigma\cap\{\boxplus ,\boxdot\}=\varnothing$ and by $\Gamma$ the tape alphabet for which $\Sigma \cup \{\boxplus ,\boxdot\} \subseteq  \Gamma$. Initially, for the input $x \in \Sigma^*$, the configuration of the first tape is 
	$\boxplus x \boxdot^\infty$ with the head being at the $\boxplus$ symbol. 
	The configurations of other $k-1$ tapes are 
	$\boxplus \boxdot^\infty$ with the 
	head pointing at the $\boxplus$ symbol. 
	During the computation the Turing machine operates as usual, reading and writing symbols from $\Gamma$ in cells to the right of the $\boxplus$ symbol.
	
	Let $k>1$. A function 
	$f: \Sigma^* \rightarrow \Sigma^*$
	is said to be computed on a 
	$k$--tape 
	Turing machine in linear time, if for the input 
	string $x \in \Sigma^*$ of length $n$  
	when started 
	with the first tape content being 
	$\boxplus x \boxdot^\infty$ and  
	other tapes content being 
	$\boxplus \boxdot^\infty$,  
	the heads pointing at 
	$\boxplus$, the Turing machine 
	reaches an accepting state and 
	halts in $Cn$ or fewer steps 
	with the first tape having 
	prefix $\boxplus f(x) \boxdot$, where
	$C>0$ is a constant. 
	There is no restriction on the 
	output beyond the first appearance of $\boxdot$ 
	on the first tape, the content of other 
	tapes and the positions of their heads.  
\end{defi}
In Definition \ref{pf_linear_time} position--faithfulness refers to a way the output in $\Sigma^*$
computed on a Turing machine is defined: 
it is the string 
$v \in \Sigma^*$ for which the content of the
first tape after a Turing machine halts 
is $\boxplus v \boxdot w \boxdot^\infty$, 
where $w$ is some string in $\Gamma^*$.
In general the output in $\Sigma^*$ computed on 
a Turing machine 
can be defined as the content 
of the first tape after it
halts with all symbols in 
$\Gamma \setminus \Sigma$ removed: see \cite{Papadimitrou} where the output of the computation 
on a one--tape Turing machine is defined as 
the string $y \in \Gamma^*$ for which 
the content of the tape after it halts is 
$\boxplus y \boxdot^\infty$, where $y$ is either empty
or the last symbol of $y$ is not $\boxdot$.  
For one--tape Turing machines the restrictions to position--faithful
ones matters -- there exist functions 
computed in linear time on a one--tape Turing machine 
which cannot be computed in 
linear time on a position--faithful one--tape Turing 
machine \cite{Stephan_lmcs_13}. The latter is 
due to the fact that shifting may require quadratic time. For multi--tape Turing machines ($k>1$) the restriction to position--faithful ones 
becomes irrelevant as shifting 
can always be done in linear time. 
Recall that a function 
$f : \Sigma^* \rightarrow \Sigma^*$ is 
called automatic if 
the language of convolutions 
$L_f = \{u \otimes v  \,|\, u, v \in \Sigma^*\}$ is 
regular. Case, Jain, Seah and Stephan showed 
that $f : \Sigma^* \rightarrow \Sigma^*$ 
is computed on a position--faithful one--tape 
Turing machine in linear time if and only if it is automatic \cite{Stephan_lmcs_13}.  
For $k>1$ the class of functions 
computed on $k$--tape Turing machines in linear 
time is clearly wider than the class of automatic functions.

Now let $G$ be a finitely generated group. 
Let $S= \{s_1, \dots, s_k\} \subset G$ 
be a set of its semigroup 
generators. That is, every group element of $G$ 
can be written as a product of elements in $S$. 
Below we define Cayley linear--time computable 
groups.   
\begin{defi} 
	\label{def_cyaley_lin}   
	Let $k>1$. We say that $G$ is Cayley  
	$k$--tape 
	linear--time computable 
	if there exist a language 
	$L \subseteq \Sigma^*$, a bijective mapping $\psi : L \rightarrow G$ and    
	functions 
	$f_s : \Sigma^* \rightarrow \Sigma^*$, 
	for $s \in S$,  each of which is computed 
	on a  
	$k$--tape Turing 
	machine in linear time, such that 
	for every $w \in L$ and $s \in S$:  $\psi(f_s (w)) = \psi (w) s$. That is, the following diagram 
	commutes:   
	\[ \begin{tikzcd}
		L \arrow{r}{f_s} \arrow[swap]{d}{\psi} & 
		L \arrow{d}{\psi} \\%
		G \arrow{r}{r_s}& G
	\end{tikzcd}, 
	\]  
	where $r_s : G \rightarrow G$ is the right 
	multiplication by $s$ in $G$: $r_s(g) = gs$ for
	all $g \in G$. 
\end{defi}
We refer to a bijective mapping 
$\psi : L \rightarrow G$ as a \emph{representation}. 
It defines a 
\emph{normal form} of a group $G$ which 
for every group element $g \in G$ 
assigns a unique string in $w \in L$ such that 
$\psi(w) = g$. For the latter we  also say 
that $w$ is a \emph{normal form of a group element} $g$. 
We will say that 
a representation $\psi : L \rightarrow G$ 
from Definition 
\ref{def_cyaley_lin}, as well as the corresponding 
normal form of $G$, are $k$--tape 
	linear--time computable. 
We note that Definition \ref{def_cyaley_lin} does not depend on the choice of a set of semigroup 
generators $S$ -- this follows directly from the observation that a composition of functions 
computed on 
$k$--tape Turing machines 
in linear time is also computed on a  
$k$--tape Turing machine in linear time.
We say that a group is Cayley linear--time computable 
if it is Cayley $k$--tape linear--time 
computable for some $k$. 

Cayley position--faithful (one--tape) linear--time 
computable groups were studied in  
\cite{BEK21}. 
They comprise wide classes of groups (e.g., all polycyclic groups), but at the same time 
retain all basic properties 
of Cayley automatic groups. Namely, each of such groups
admits a normal form for which 
the right multiplication by a fixed group element 
is computed in linear time and 
for a given word $g_1\dots g_n$, $g_i \in S$, the 
normal form of $g =g_1\dots g_n$ is computed in quadratic time. Furthermore, a position--faithful (one--tape) 
linear--time computable normal form is 
always \emph{quasigeodesic} \cite[Theorem~1]{BEK21} 
(see Definition \ref{quasigeodesic_def} for 
the notion of a quasigeodesic normal form 
introduced by Elder and Taback
\cite[Definition~4]{ElderTabackCgraph}).  
Moreover, this statement 
can be generalized to Theorem \ref{onetape_implies_quasigeodesic_thm}.
\begin{defi}
	\label{quasigeodesic_def}	
	A representation 
	$\psi: L \rightarrow G$ (a normal form of $G$)
	is said to be quasigeodesic  if 
	there exists a constant $C>0$
	such that for all $g \in G$:
	$|w| \leqslant C (d_S (g) + 1)$,   
	where $w$ is the normal form of $g$, 
	$|w|$ is its length and 
	$d_S(g)$ is the length of a shortest word 
	$g_1 \dots g_n$, $g_i \in S$, for which 
	$g=g_1 \dots g_n$ in $G$.    
\end{defi}
     
\begin{thm}
	\label{onetape_implies_quasigeodesic_thm}	
	A one--tape $o(n \log n)$--time computable
	normal form  is quasigeodesic.  
\end{thm}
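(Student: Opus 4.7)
The strategy is to reduce the $o(n \log n)$ hypothesis to the linear--time case already handled in \cite{BEK21}, by showing that each right--multiplication function $f_s$ is forced to be automatic, and then exploiting the standard length bound available for automatic functions.

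First, I would invoke (or prove by a Hennie--style crossing--sequence argument) a functional analogue of Hennie's theorem for position--faithful one--tape Turing machines: every function $f : \Sigma^* \rightarrow \Sigma^*$ computed on such a machine in time $o(n \log n)$ is in fact computable in linear time, and hence automatic by \cite{Stephan_lmcs_13}. The crossing--sequence pigeonhole bounds the number of distinct sequences of state--and--direction crossings over the cell boundaries of the tape by a constant whenever the running time is $o(n \log n)$, which permits a synchronous finite--state description of the computation; this description is exactly what recognises the convolution $\{w \otimes f(w) : w \in \mathrm{dom}(f)\}$ as a regular language.

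Second, once each $f_s$ is known to be automatic, regularity of the convolution language together with the functional (single--valued) nature of $f_s$ gives a constant $C_s$ such that $|f_s(w)| \leqslant |w| + C_s$ for every $w$ in the domain of $f_s$: otherwise, pumping a long suffix consisting of padding symbols in the convolution would produce two distinct values $v \neq v'$ with $w \otimes v, w \otimes v' \in L_{f_s}$, contradicting functionality.

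Finally, fix $g \in G$ with $d_S(g) = n$ and write $g = s_1 \cdots s_n$ with $s_i \in S$. Let $w_e$ be the normal form of the identity and set $w_k = f_{s_k}(w_{k-1})$, so that $w_n$ is the normal form of $g$. Applying the inequality from the previous step at each stage and letting $C = \max_{s \in S} C_s$ yields
\[
|w_n| \;\leqslant\; |w_e| + C\,n \;\leqslant\; \max\bigl(|w_e|,\,C\bigr)\,\bigl(d_S(g)+1\bigr),
\]
which is precisely the quasigeodesic bound of Definition~\ref{quasigeodesic_def}. The main obstacle is the first step: making the Hennie collapse from $o(n \log n)$ down to linear time work for \emph{position--faithful} one--tape machines computing \emph{functions} rather than merely recognising languages, since one must extract a synchronous two--tape automaton from the crossing--sequence data, not just a one--tape acceptor. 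Once that functional Hennie theorem is in hand, the remaining arithmetic is routine and essentially mirrors the proof of \cite[Theorem~1]{BEK21}.
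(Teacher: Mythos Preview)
Your overall structure---show each $f_s$ satisfies $|f_s(w)| \leqslant |w| + C_s$, then telescope along a geodesic spelling of $g$---matches the paper exactly. Where you diverge is in how to obtain that length bound. You propose to establish a functional Hennie theorem directly via crossing sequences, and you correctly flag this as the hard step. The paper sidesteps that lemma entirely by reducing to the classical \emph{decision} version (Hartmanis \cite{Hartmanis68}, Trachtenbrot \cite{Trachtenbrot64}: a one--tape $o(m\log m)$ acceptor recognises only regular languages). Concretely, the paper builds a machine $\mathrm{TM}_s''$ with two synchronously moving heads---hence equivalent to a single tape over a product alphabet---which on input $x \otimes z$ first simulates $\mathrm{TM}_s$ on the first track, writing a marked blank $\mathring{\boxdot}$ in place of every $\boxdot$ so that the entire final tape content survives as a $\boxdot$--free string $\mathrm{TM}_s'(x)$, and then compares this symbol by symbol against $z$ on the second track. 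This acceptor runs in $o(m\log m)$ time, so the graph language $L_s=\{x\otimes \mathrm{TM}_s'(x)\}$ is regular by the cited theorem; pumping the padding suffix gives $|\mathrm{TM}_s'(x)|\leqslant |x|+C_s$, and $|f_s(x)|\leqslant |\mathrm{TM}_s(x)| \leqslant |\mathrm{TM}_s'(x)|$ is immediate.

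So your plan is sound, but the paper's route is cheaper: it offloads all the crossing--sequence combinatorics to a black--box citation rather than re--deriving them in the functional setting. The marked--blank trick (working with $\mathrm{TM}_s'$ rather than $f_s$ itself) is precisely the device that makes this reduction go through, since it converts the position--faithful output convention into an ordinary whole--tape output that a synchronous comparison can verify.
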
	
\begin{proof} 
	Let $\psi : L \rightarrow G$ be a bijection between a language $L \subseteq \Sigma^*$ 
	and a group $G$ defining a Cayley one--tape $o(n \log n)$--time computable normal form of $G$. 
	Let $S \subset G$ be a finite 
	set of semigroup generators. For each $s \in S$ 
	there exists a one--tape $o(n \log n)$--time 
	computable function 
	$f_s : \Sigma^* \rightarrow \Sigma^*$ such that 
	$\psi(f_s (w)) = \psi (w)s$ for all $w \in L$.  
	For a given $s \in S$ we denote by $\mathrm{TM}_s$ a one--tape Turing machine computing the function $f_s$ in 
	$o(n \log n)$ time. For a given 
	$x \in \Sigma^*$ we denote by $\mathrm{TM}_s (x)$ 
	the output of the computation on $\mathrm{TM}_s$ for the input $x$: it is the string 
	$y$ over the tape alphabet of $\mathrm{TM}_s$ 
	for which the content of the tape after $\mathrm{TM}_s$ 
	halts is $\boxplus y \boxdot^\infty$, where
	$y$ is either empty or the last symbol of $y$ 
	is not $\boxdot$ \cite{Papadimitrou}.
	We denote by $\mathrm{TM}_s'$ a Turing machine 
	which works exactly like $\mathrm{TM}_s$, but
	writes the marked blank symbol $\mathring{\boxdot}$
	instead of $\boxdot$. 
	Let $\Sigma_s = 
	\left(\Gamma_s \cup \{\mathring{\boxdot} \}\right) 
	\setminus \{ \boxdot\}$, where $\Gamma_s$ is the 
	tape alphabet of $\mathrm{TM}_s$. 
	Below we show that the language 
	of convolutions 
	$L_s = \{x \otimes z  \, 
	| \,x \in \Sigma^* \wedge   z= \mathrm{TM}_s'(x) \in \Sigma_s^*\}$ is regular. 
	
	We first recall the notion of convolutions of  
	two strings $x \in \Sigma^*$ and $z \in \Sigma_s^*$.
	Let $\diamond$ be a padding symbol which 
	does not belong to the alphabet $\Sigma$ and
	$\Sigma_s$.
	The convolution $x \otimes z$ is the string
	of length $\max \{|x|,|z|\}$ for which the 
	$k$th symbol is $\sigma_1 \choose \sigma_2$,
	where $\sigma_1$ is the $k$th symbol of $x$ if 
	$k \leqslant |x|$ and $\diamond$ otherwise and 
	$\sigma_2$ is the $k$th symbol of $z$ if
	$k \leqslant |z|$ and $\diamond$ otherwise.   
	We denote by $\Sigma^* \otimes \Sigma_s^*$ 
	the language of all convolutions
	$\Sigma^* \otimes \Sigma_s^* = 
	\{x \otimes z \, | \, x \in \Sigma^* \wedge  
	z \in \Sigma_s^* \}$. 
	Note that at most one of $\sigma_1$ and $\sigma_2$ can be
	equal to $\diamond$, but never both.   
	Therefore, $\Sigma^* \otimes \Sigma_s^*$ is 
	a language over the alphabet 
	$\Sigma_s'$ consisting of all symbols  
	$\sigma_1 \choose \sigma_2$ for which 
	$\sigma_1 \in \Sigma \cup \{ \diamond\}$,  
	$\sigma_2 \in \Sigma_s \cup \{ \diamond\}$ and 
	$\sigma_1$ and $\sigma_2$ cannot be 
	equal to $\diamond$ simultaneously.  
	Let us describe a Turing machine $\mathrm{TM}_s''$ 
	recognizing the language $L_s$. For the sake of 
	convenience we assume that $\mathrm{TM}_s''$ 
	has two semi--infinite tapes with the heads on each of
	the two tapes moving only synchronously. 
	\begin{algo} 
		\label{nlogn_alg}
		Initially, 
		the input string over the alphabet 
		$\Sigma_s'$ 
		is written 
		on the convolution of two semi--infinite 
		tapes -- the first and the second component 
		for each symbol in $\Sigma_s'$ appears on 
		the first and the second tape, respectively.   
		For each of the two tapes 
		the head is over the first cell.
		\begin{enumerate}
			\item{First $\mathrm{TM}_s''$ 
				scans the input from left to right 
				checking  
				if the input is of the form  
				$x \otimes z \in \Sigma^* \otimes \Sigma_s^*$ for some $x \in \Sigma^*$ 
				and $z \in \Sigma_s^*$. If it is not, 
				the input is rejected.  
				Simultaneously, if 
				on one of the tapes 
				$\mathrm{TM}_s''$ reads $\diamond$,  
				it writes $\boxdot$. After the heads read 
				$\boxdot$ on both tapes detecting the end of 
				the input,  they return back to the 
				initial position.}
			\item{$\mathrm{TM}_s''$ works exactly like 
				$\mathrm{TM}_s'$ on the first tape until it halts
				ignoring the content of the second tape.  
				Then the heads go back to the initial position.}	  
			\item{ After 
				that $\mathrm{TM}_s''$ scans the content of both 
				tapes checking if the heads read the same 
				symbol. If the heads do not read the same symbol, 
				$\mathrm{TM}_s''$ halts rejecting the input. 
				When the heads reads $\boxdot$ on both tapes 
				$\mathrm{TM}_s''$ halts accepting the input.}	  
			
		\end{enumerate}	
		
	\end{algo}	    	
	Let $n = |x|$ and $m =  \max\{|x|,|z|\}$. 
	In the first stage of Algorithm \ref{nlogn_alg} 
	$\mathrm{TM}_s''$ makes $O(m)$ moves. In the second 
	stage it makes $o(n \log n)$ moves. As the length 
	of $\mathrm{TM}_s'(x)$ is at most 
	$o (n \log n)$, in the third stage 
	$\mathrm{TM}_s''$ makes at most $o(n \log n)$
	moves. Since $n \leqslant m$ we obtain that $\mathrm{TM}_s''$
	makes at most $o(m \log m)$ moves before it either 
	accepts or rejects the input $x \otimes z$.  
	As the heads of $\mathrm{TM}_s''$ move only
	synchronously, $\mathrm{TM}_s''$ works 
	exactly like a one--tape Turing machine recognizing 
	the language $L_s$ in time $o (m \log m)$, where 
	$m$ is a length of the input. 
	Recall that Hartmanis \cite[Theorem~2]{Hartmanis68} 
	and, independently, Trachtenbrot \cite{Trachtenbrot64}  
	showed that a language recognized by a one--tape Turing 
	machine in $o(m \log m)$ time is regular. 
	Therefore, the language $L_s$ is regular.
	Now, by the pumping lemma, there exists a constant 
	$C_s>0$ such that 
	$|z| \leqslant |x|+C_s$ for all 
	$x \in \Sigma^*$, where $z =\mathrm{TM}_s' (x)$. 
	We have: $|f_s(x)| \leqslant |\mathrm{TM}_s (x)| 
	\leqslant |\mathrm{TM}_s' (x)|$ for all 
	$x \in \Sigma^*$.
	Therefore, $|f_s (x)| \leqslant |x| + C_s$ for 
	all $x \in \Sigma^*$.  
	Let $C>0$ be some positive constant which is greater 
	than or equal to $C_s$ for every $s \in S$ and  
	$|w_0|$, where $\psi (w_0) = e$. For a given 
	$g \in G$ let $g_1 \dots g_k$, $g_i \in S$, 
	be a shortest word for which $g = g_1 \dots g_k$ 
	in $G$ and $w$ be the string for which 
	$\psi(w)=g$. 
	Clearly, we have that 
	$|w| \leqslant C k + |w_0| \leqslant C (d_S (g) + 1)$ which proves the theorem.          
\end{proof}	
If $k > 1$, a $k$--tape linear--time computable 
normal form is not necessarily quasigeodesic: 
in Section \ref{section_Z2wrZ2} we show that 
the normal form of the wreath product 
$\mathbb{Z}_2 \wr \mathbb{Z}^2$ constructed 
in \cite[Section~5]{BK15} is $2$--tape linear--time computable; but this normal form is not quasigeodesic 
\cite[Remark~9]{BK15}. 
However, if a $k$--tape linear--time computable normal 
form is quasigeodesic, then it satisfies 
the same basic algorithmic property as 
a position--faithful (one--tape) linear--time 
computable 
normal form -- it is computed in quadratic time 
\cite[Theorem~2]{BEK21}.  
Indeed, let $\psi : L \rightarrow G$ be a bijection 
between a language $L \subseteq \Sigma^*$ and a 
group $G$ defining a quasigeodesic $k$--tape linear--time 
computable normal form of $G$.  
Let $S \subset G$ be a finite set of semigroup 
generators.  

\begin{thm} 
	\label{quadratic_time_nf}   
	There is a quadratic--time algorithm which 
	for a given word $g_1 \dots g_n \in S^*$, 
	$g_i \in S$, computes the normal form 
	of the group element $g= g_1 \dots g_n \in G$ --
	the string $w \in L$ for which $\psi(w) = g$.     
\end{thm}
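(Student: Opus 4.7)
The plan is to compute the normal form $w_n$ of $g = g_1 \cdots g_n$ incrementally, letting the group element grow one generator at a time. Fix once and for all the normal form $w_0 \in L$ of the identity, i.e.\ $\psi(w_0) = e$. For $i = 1, \ldots, n$, set $w_i = f_{g_i}(w_{i-1})$. By the commuting square in Definition~\ref{def_cyaley_lin}, $\psi(w_i) = \psi(w_{i-1}) g_i$, so by induction $\psi(w_n) = g_1 \cdots g_n = g$, and $w_n$ is the required normal form. The algorithm simply runs these $n$ Turing machine computations in sequence and outputs $w_n$.

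Since each $f_s$ is computed by a $k$--tape Turing machine in linear time, and $S$ is finite, there is a single constant $C_1 > 0$ such that the computation of $w_i = f_{g_i}(w_{i-1})$ from the input $w_{i-1}$ halts in at most $C_1(|w_{i-1}| + 1)$ steps. The quasigeodesic hypothesis on the normal form provides a constant $C_2 > 0$ with $|w_i| \leqslant C_2(d_S(\psi(w_i)) + 1) \leqslant C_2(i + 1)$, since $\psi(w_i) = g_1 \cdots g_i$ is exhibited as a product of $i$ elements of $S$. Consequently the overall running time is bounded by
$$\sum_{i=1}^n C_1(|w_{i-1}| + 1) \;\leqslant\; \sum_{i=1}^n C_1 C_2 (i + 1) \;=\; O(n^2),$$
which yields the theorem.

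The only mild technical issue is that the $k$--tape Turing machine computing $f_{g_{i+1}}$ requires its tapes to be in their initial configurations --- the first tape containing $\boxplus w_i \boxdot^\infty$ with the head at $\boxplus$, and every other tape containing $\boxplus \boxdot^\infty$ with the head at $\boxplus$ --- whereas after running $f_{g_i}$ the first tape may contain arbitrary symbols to the right of $\boxplus w_i \boxdot$, the other tapes may hold arbitrary content, and the heads may be anywhere. Since the computation of $f_{g_i}$ took only $C_1(|w_{i-1}| + 1)$ steps, at most that many cells on each tape have been disturbed, so all tapes can be scrubbed back to the initial configuration and the heads returned to $\boxplus$ in time linear in $|w_{i-1}|$. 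This overhead is absorbed into the constant $C_1$ and does not affect the $O(n^2)$ bound. I do not anticipate any substantial obstacle: the argument is the direct multi--tape analogue of the one used in \cite[Theorem~2]{BEK21} for the position--faithful one--tape setting, the quasigeodesic assumption being precisely what makes the telescoping sum $\sum |w_{i-1}|$ quadratic rather than uncontrolled.
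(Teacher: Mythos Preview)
Your proof is correct and follows essentially the same approach as the paper: compute the normal forms $w_0, w_1, \dots, w_n$ incrementally, use the linear--time bound on each $f_{g_i}$ together with the quasigeodesic hypothesis to bound $|w_i| = O(i)$, and sum to get $O(n^2)$. Your discussion of resetting the tape configurations between successive calls is a technical detail the paper omits, but it is handled correctly and does not change the argument.
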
	 
\begin{proof} 
	Let $w_i \in L$ be the normal form of 
	the group element $g_1 \dots g_i$:
	$\psi (w_i) = g_1 \dots g_i$ for $i =1,\dots,n$.  
	Let $w_0$ be the normal form of the identity: 
	$\psi (w_0) = e$.  
	For each $i=0,\dots,n-1$, the string 
	$w_{i+1}$ is computed from $w_i$ on a 
	$k$--tape Turing machine in 
	$O(|w_i|)$ time.
	Since the normal form is quasigeodesic, 
	$|w_i| \leqslant C (i + 1)$ for all  
	$i =0, \dots, n-1$. So  
	$w_{i+1}$ is computed from $w_i$
	in $O(i)$ time for all $i=0,\dots,n-1$.  
	Now an algorithm computing $w$ from a given 
	input $g_1\dots g_n$ is as follows.   	
	Starting from $w_0$ it consecutively 
	computes $w_1, w_2, \dots, w_{n-1}$ and $w_n$.  
	The running time for this algorithm is at most 
	$O(n^2)$.   	
\end{proof}	
As an immediate corollary of Theorem 
\ref{quadratic_time_nf} we obtain that
the word problem for a group $G$ which admits 
a quasigeodesic $k$--tape linear--time 
computable normal form is decidable in quadratic time. 

\section{The Wreath Product  
	$\mathbb{Z}_2 \wr \mathbb{Z}^2$} 
\label{section_Z2wrZ2}

In this section we will show that the 
group $\mathbb{Z}_2 \wr \mathbb{Z}^2$ is Cayley $2$--tape linear--time computable. 
Every group element of 
$\mathbb{Z}_2 \wr \mathbb{Z}^2$ 
can be written as a pair $(f,z)$, where 
$z \in \mathbb{Z}^2$ and
$f : \mathbb{Z}^2 \rightarrow  \mathbb{Z}_2$ 
is a function for which
$f(\xi)$ is the 
non--identity element of $\mathbb{Z}_2$
for at most finitely many $\xi \in \mathbb{Z}^2$. 
We denote by $c$ the non--identity element of 
$\mathbb{Z}_2$ and by $a = (1,0)$ and 
$b = (0,1)$ the generators of 
$\mathbb{Z}^2 = \{(x,y)\, 
|\, x,y \in \mathbb{Z} \}$. 
The group $\mathbb{Z}_2$ is 
canonically embedded in 
$\mathbb{Z}_2 \wr \mathbb{Z}^2$ 
by mapping $c$ to  
$(f_c,e)$, where 
$f_c : \mathbb{Z}^2 
\rightarrow \mathbb{Z}_2$ 
is a function for which 
$f_c (z') = e$ for all 
$z' \neq e$ and 
$f_c (e) = c$. 
The group $\mathbb{Z}^2$ is 
canonically embedded in 
$\mathbb{Z}_2 \wr \mathbb{Z}^2$ 
by mapping 
$z \in \mathbb{Z}^2$ to 
$(f_e,z)$, where 
$f_e (z') = e$ for all  
$z' \in \mathbb{Z}^2$.
Therefore, we can identify 
$a,b$ and $c$ with the 
corresponding group element 
$(f_e,a)$, $(f_e,b)$ and 
$(f_c,e)$ in 
$\mathbb{Z}_2 \wr \mathbb{Z}^2$, 
respectively. The group 
$\mathbb{Z}_2 \wr \mathbb{Z}^2$ 
is generated by $a,b$ and $c$, 
so $S = \{a, a^{-1}, b, b^{-1}, c\}$
is a set of its semigroup generators. 
The formulas for the right 
multiplication 
in 
$\mathbb{Z}_2 \wr \mathbb{Z}^2$
by $a$, $a^{-1}$, $b$, $b^{-1}$ and $c$ 
are as follows. For a given 
$g = (f,z) \in \mathbb{Z}_2 \wr 
\mathbb{Z}^2$, where $z = (x,y)$,  
$ga = (f,z_1)$ for $z_1  = (x+1,y)$, 
$ga^{-1} = (f, z_1')$ for 
$z_1' = (x-1,y)$, 
$gb = (f,z_2)$ for 
$z_2 = (x,y+1)$, 
$g b^{-1} = (f, z_2')$ for 
$z_2 ' = (x,y-1)$ and 
$gc = (f',z)$, where 
$f'(z') = f(z')$ for all $z' \neq z$ 
and $f'(z) = f(z)c$.    

\begin{figure}
	\centering 	
	\includegraphics[width=4cm]{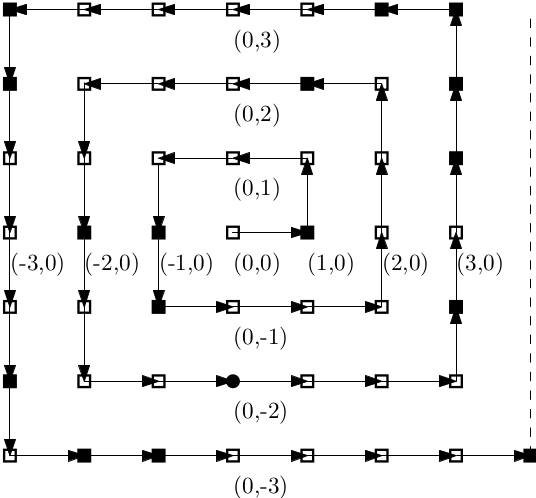}
	\caption{{\small An infinite digraph $\Gamma$ and 
			an element $h \in \mathbb{Z}_2 \wr \mathbb{Z}^2$.}}
	\label{z2wrz2fig1}
\end{figure}

 \emph{Normal form.}
We will use a normal form 
for elements of 
$\mathbb{Z}_2 \wr \mathbb{Z}^2$
described in \cite{BK15}. 
Let  $\Gamma$ be an infinite directed graph
shown on Fig.~\ref{z2wrz2fig1} which is
isomorphic to $\left(\mathbb{N};\mathrm{S}\right)$, 
where 
$\mathrm{S} : \mathbb{N} \rightarrow \mathbb{N}$ 
is the successor 
function $\mathrm{S}(n) = n + 1$.  
The vertices of $\Gamma$
are identified with  
elements of $\mathbb{Z}^2$;  
each vertex of $\Gamma$, except $(0,0)$, has 
exactly one ingoing and one outgoing edges and 
the vertex $(0,0)$ has one outgoing 
edge and no ingoing edges.  
Let $t : \mathbb{N} \rightarrow \mathbb{Z}^2$ 
be a mapping defined as follows: 
$t(1) = (0,0)$ and, for $k>1$, $t(k) = (x,y)$ 
is the end vertex of a directed path in 
$\Gamma$ of length $k-1$ which starts 
at the vertex $(0,0)$.  

We denote by $\Sigma$ the alphabet 
$\Sigma = \{0,1, C_0, C_1 \}$. 
Let  $g = (f,z)$ be an element
of the group $\mathbb{Z}_2 \wr \mathbb{Z}^2$. 
We denote by 
$r$ a number  
for which $t(r) = z$. Let
$m = \max \{ k \, | \, f (t (k)) = 1\}$ and 
$\ell = \max \{m,r\}$. 
A normal form $w \in \Sigma^*$
of the group  element $g$ is 
defined to be 
a string $w = \sigma_1 \dots \sigma_\ell$ 
of length $\ell$ for which 
$\sigma_k = 0$, if $f(t(k))=0$ and $k \neq r$,
$\sigma_k = 1$ if $f(t(k))=1$ and $k \neq r$,
$\sigma_k = C_0$ if $f(t(k))=0$ and $k = r$,
$\sigma_k = C_1$ if $f(t(k))=1$ and $k = r$.
For an illustration consider a group element 
$h \in \mathbb{Z}_2 \wr \mathbb{Z}^2$ shown 
on Fig.~\ref{z2wrz2fig1}: 
a white square indicates that the value of a function $f$ at a given point is $e$, 
a black square indicates that it is $c$, 
a black disk at the point $p= (0,-2)$ indicates 
that $f(p)=c$ and 
it specifies the position of the lamplighter.  
A normal form of the group element $h$ is 
the string:
$0100011000000100001000C_1000101111000011000101100001$. 

We denote by $L \subseteq \Sigma^*$  
a language of all such normal forms. 
The described normal form of 
$\mathbb{Z}_2 \wr \mathbb{Z}^2$ 
defines a bijection 
$\psi :   L \rightarrow \mathbb{Z}_2 \wr  \mathbb{Z}^2$  
mapping $w \in L$ to the 
corresponding group element 
$g  \in \mathbb{Z}_2 \wr \mathbb{Z}^2$. 
This normal form is not quasigeodesic 
\cite[Remark~9]{BK15}.

 \emph{Construction of Turing machines computing  the right multiplication in 
	$\mathbb{Z}_2 \wr \mathbb{Z}^2$
	by $a^{\pm 1}, b^{\pm 1}$ and $c$.}    
For the right multiplication  in 
$\mathbb{Z}_2 \wr \mathbb{Z}^2$ by $c$,  
consider a one--tape Turing machine  which 
reads the input $u \in \Sigma^*$ from left to 
right and when the head reads a symbol 
$C_0$ or $C_1$ it changes it  
to  $C_1$ or $C_0$, respectively, and then it halts.
If the head reads the blank symbol 
$\boxdot$, which indicates that the 
input $u$ has been read, it halts. 
The described Turing machine halts 
in linear time for every input 
$u \in \Sigma^*$. Moreover, if the input
$u \in L$, it computes the output 
$v \in L$ for which $\psi (u)c = \psi (v)$.

\begin{figure}
	\centering 	
	\includegraphics[width=5cm]{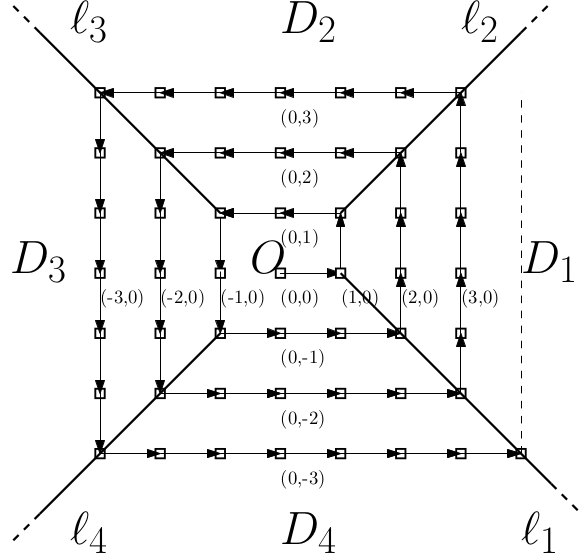}
	\caption{{\small A digraph $\Gamma$ and 
			subsets of $\mathbb{Z}^2$:   
			$O$, $\ell_1$, $\ell_2$, 
			$\ell_3$, $\ell_4$, $D_1$, 
			$D_2$, $D_3$ and $D_4$.}}
	\label{z2wrz2fig2}
\end{figure}

Let us describe a two--tape Turing 
machine computing the right 
multiplication by $a$ 
in  $\mathbb{Z}_2 \wr \mathbb{Z}^2$ 
which halts in linear time on every 
input in $u \in \Sigma ^*$.
We refer to this Turing 
machine as $\mathrm{TM}_a$.    
A key idea for constructing $\mathrm{TM}_a$ 
is to divide 
$\mathbb{Z}^2 = \{(x,y)\, | \, x,y \in \mathbb{Z}\}$ 
into nine subsets
$O$, $\ell_1$, $\ell_2$, $\ell_3$, $\ell_4$, $D_1$, $D_2$, $D_3$ and $D_4$ shown on Fig.~\ref{z2wrz2fig2}: 
\begin{itemize}	
	\item{$O = \{(0,0)\}$,}
	\item{$\ell_1 = \{\left(x,-(x-1)\right) \in 
		\mathbb{Z}^2 \,|\, 	x > 0 \}$,}
	\item{$\ell_2 = \{\left(x,x \right) \in \mathbb{Z}^2 \,|\,x > 0 \}$,}
	\item{$\ell_3 = \{\left(-x,x\right) \in \mathbb{Z}^2 \,|\, x > 0 \}$,}
	\item{$\ell_4 = \{\left(-x,-x\right) \in \mathbb{Z}^2 \,|\, 	x > 0 \}$,}
	\item{$D_1 = \{ \left(x, y \right) \in  
		\mathbb{Z}^2 \,|\, -(x-1) < y < x, x > 1\}$,}
	\item{$D_2 = \{\left(x,y\right) \in 
		\mathbb{Z}^2 \,|\, 	-y < x < y, y> 0 \}$,}
	\item{$D_3 = \{ \left(x,y\right) \in   
		\mathbb{Z}^2 \, | \, x < y < -x, x <0 \},$}
	\item{$D_4 = \{ \left(x,y\right) \in \mathbb{Z}^2\, | \, y <  x < -y+1,  y <0\}$.}
\end{itemize}	 
For a given $k \in \mathbb{N}$ we denote by 
$i$ the number of turns  
around the point $(0,0)$
a cursor makes
when moving along
the graph $\Gamma$ from the vertex  
$t(1)$ to the vertex $t(k)$.  
Formally, $i$ is defined as 
follows. 
Let $k_j = (j+1, -j)$ for $j \geqslant 1$.
If $k_j \leqslant k < k_{j+1}$, we 
put $i=j$; if $k < k_1$, we put $i=0$. 

Now we notice the following. If $t(k) \in \ell_1$, 
then $t(m) \in D_1$ for $k < m < k + (2i+1)$ 
and $t(m) \in \ell_2$ for 
$m = k + (2i + 1)$. If $t(k) \in \ell_2$, 
then $t(m) \in D_2$ for 
$k < m < k + (2i + 2)$ and 
$t(m) \in \ell_3$ for $m = k + (2i + 2)$. 
If $t(k) \in \ell_3$, then 
$t(m) \in D_3$ for $k < m < (2i +2)$ and
$t(m) \in \ell_4$ for $m = k + (2i +2)$.  
If $t(k) \in \ell_4$, then $t(m) \in D_4$ 
for $k < m < k + (2i  + 3)$ and 
$t(m) \in \ell_1$ for $m = k + (2i + 3)$. 
These observations ensure the correctness of the 
stage 3  of Algorithm \ref{Z2wrZ2_alg1}.

\begin{algo}[First iteration]   	  
	\label{Z2wrZ2_alg1}  
	Initially for $\mathrm{TM}_a$ a content of the 
	first tape is $\boxplus u \boxdot^\infty$ with 
	a head over $\boxplus$. A content of the second 
	tape is $\boxplus \boxdot^\infty$ with a head over 
	$\boxplus$.    
	In the first iteration 
	$\mathrm{TM}_a$ moves a head associated to the first tape from left to right until it 
	reads the symbols $C_0$ or $C_1$ 	  
	each time identifying 
	the set  
	$O$, $\ell_1$, $\ell_2$, $\ell_3$, $\ell_4$, 
	$D_1$, $D_2$, $D_3$ or $D_4$
	which contains $t(k)$ for the $k$th symbol 
	of $u$ being read.
	The second tape of
	$\mathrm{TM}_a$ 
	is used for counting  
	the number of turns $i$ when $t(k)$ moves along 
	a spiral formed by a graph 
	$\Gamma$.    	   
	Formally, in the first iteration 
	$\mathrm{TM}_a$ works as 
	follows until 
	the head associated to the first tape 
	reads either $C_0,C_1$ or $\boxdot$.  
	Let $S$ be a variable 
	which can take values only in the set $\{O, \ell_1, \ell_2, \ell_3, \ell_4, 
	D_1, D_2, D_3, D_4\}$.  
	\begin{enumerate}
		\item{$\mathrm{TM}_a$ reads the first $9$
			symbols of $u$ setting 
			$S$ to $O$, 
			$\ell_1$, $\ell_2$, $D_2$, $\ell_3$, 
			$D_3$, $\ell_4$, and $D_4$ when 
			the head reads the $k$th symbol 
			of $u$ for $k=1,2,3,4,5,6,7,8$, 
			respectively.}
		\item{$\mathrm{TM}_a$ reads the $10$th 
			symbol  of $u$ and set $S=\ell_1$. 
			On the second tape $\mathrm{TM}_a$ 
			moves the head right to the next cell 
			and writes the symbol $T$
			used for storing 
			the number of turns $i$.}		
		\item{The following steps are 
			repeated in loop one after another. 
			\begin{enumerate} 
				\item{$\mathrm{TM}_a$ reads the next 
					symbol of $u$,  
					moves the head associated to the 
					second tape left to the 
					previous cell 
					and set $S=D_1$. 
					Then $\mathrm{TM}_a$ keeps reading $u$ 
					and, simultaneously, 
					on the second tape it
					moves the head 
					first left until it reads 
					$\boxplus$ and then right until
					it reads $\boxdot$. 
					Then it sets 
					$S= \ell_2$.}
				\item{$\mathrm{TM}_a$ reads the next symbol 
					of $u$, moves the head 
					associated to the second tape left to 
					the previous cell and set $S=D_2$. Then 
					$\mathrm{TM}_a$ keeps reading $u$ and,
					simultaneously, on the second tape it
					moves the head first left until it reads $\boxplus$ and then right until it 
					reads $\boxdot$. Then it sets $S=\ell_3$.} 
				
				\item{$\mathrm{TM}_a$ reads the next symbol of 
					$u$, moves the head 
					associated to the second tape left to 
					the previous cell and set $S=D_3$. Then 
					$\mathrm{TM}_a$ keeps reading $u$ and,
					simultaneously, on the second tape  
					it moves the head first left until it 
					reads $\boxplus$ and then right until 
					it reads $\boxdot$. Then it sets 
					$S=\ell_4$.} 
				
				\item{$\mathrm{TM}_a$ reads the next symbol of $u$, moves the head associated to the second tape left to the previous cell and set $S=D_4$.
					Then $\mathrm{TM}_a$ keeps reading $u$ and, 
					simultaneously, on the second tape it moves 
					the head first left until it reads $\boxplus$ 
					and then right until it reads $\boxdot$.
					Then $\mathrm{TM}_a$ reads the next symbol 
					of $u$, writes $T$ on the second tape 
					and set $S=\ell_1$.}
		\end{enumerate}}      	    	     	      
	\end{enumerate}	
	If the head associated to the first tape reads 
	$\boxdot$, $\mathrm{TM}_a$ halts. 
	If it reads $C_0$ or $C_1$, $\mathrm{TM}_a$ 
	checks if the head associated to the second 
	tape reads $\boxdot$. If the symbol it reads is 
	not $\boxdot$, on the seconds tape $\mathrm{TM}_a$ moves the head right until it reads $\boxdot$.  
	Finally, unless $\mathrm{TM}_a$ halts, the 
	content of the first tape is 
	$\boxplus u \boxdot^\infty$ with the head over 
	$C_0$ or $C_1$ symbol and the content of the 
	second tape is $\boxplus T^i \boxdot^\infty$ 
	with the head over the first $\boxdot$ symbol. 
\end{algo}
The right multiplication 
of $g= (f,z) \in \mathbb{Z}_2 \wr \mathbb{Z}^2$
by $a$ changes a position of the lamplighter $z$ 
mapping $g$ to $ga = (f,z')$, where 
$z = (x,y)$ and
$z' = (x+1,y)$. 
Let $k$ and $k'$ be the integers 
for which $t(k)=z$ and $t(k')=z'$.  
Now we notice the following. 
If $z \in D_4 \cup \ell_4$, then 
$k ' = k + 1$.
If $z \in \ell_1 \cup D_1 \cup \ell_2 $, 
then $k' = k +  (8i + 9)$. 
If $z \in D_2 \cup \ell_3$, 
then $k ' = k - 1$.
If $z \in D_3$, then  
$k' = k - (8i + 5)$.  
So for the second iteration 
there are four cases to consider:
$S \in \{O,D_4,\ell_4 \}$, $S \in \{\ell_1, D_1, 
\ell_2 \}$, $S \in \{D_2, \ell_3\}$ and
$S = D_3$.

\noindent {\bf Case 1.} Suppose 
$S \in \{O, D_4, \ell_4\}$. 
On the first tape $\mathrm{TM}_a$ writes 
$0$ or $1$, if the head reads $C_0$ or $C_1$, 
respectively. Then the head moves right to the 
next cell. If the head reads $0$ or 
$\boxdot$, it writes $C_0$. 
If the head reads $1$, 
it writes $C_1$. Finally $\mathrm{TM}_a$ halts.

\noindent {\bf Case 2.} Suppose 
$S \in \{\ell_1, D_1, \ell_2\}$.
We divide a routine for this case into three stages. 
\begin{enumerate}[1.]
	\item{On the first tape $\mathrm{TM}_a$ 
		writes $0$ or $1$, if the head reads $C_0$ or $C_1$, respectively.}
	\item{The following subroutine is 
		repeated four times:
		\begin{enumerate}[(a)]
			\item{The head associated to the first tape 
				moves right to the next cell. 
				If the head reads $\boxdot$, it writes $0$. 
				The head associated to the second tape 
				moves left to the previous cell. } 
			\item{The head associated to the first tape keeps moving to the right writing $0$ if it reads 
				$\boxdot$. Simultaneously, the head associated to the second tape moves first left until it reads $\boxplus$ and then right until it reads 
				$\boxdot$.}       
	\end{enumerate}}
	\item{The head associated to the first tape moves 
		right to the next cell making the last 
		$(8i+9)$th move. If the head reads $0$ or 
		$\boxdot$, it writes $C_0$. 
		If the head reads $1$, 
		it writes $C_1$. Finally $\mathrm{TM}_a$ halts.} 
\end{enumerate}

\noindent {\bf Case 3.} Suppose 
$S \in \{D_2, \ell_3 \}$.  
We divide a routine 
for this case into two stages.
\begin{enumerate}[1.]
	\item{Let $\mathrm{ERASE}$ be a boolean variable.
		If the head associated to the first tape 
		reads $C_1$, $\mathrm{TM}_a$ sets $\mathrm{ERASE} = \mathrm{false}$. 
		If it reads $C_0$, the head moves right to the 
		next cell. If it reads $\boxdot$, 
		$\mathrm{TM}_a$ sets $\mathrm{ERASE} = \mathrm{true}$; otherwise, it sets $\mathrm{ERASE} = \mathrm{false}$.
		Then the head moves back to the previous cell. 
		Note that  $\mathrm{ERASE}=\mathrm{true}$ iff the head 
		reads the last symbol of $u$ which is $C_0$.   
		Now, if $\mathrm{ERASE}$, the head associated to 
		the first tape writes $\boxdot$.
		If $\mathrm{not} \,\, \mathrm{ERASE}$, the head 
		writes $0$ or $1$ if it reads $C_0$ or $C_1$, 
		respectively.} 
	\item{The head associated to the first tape moves 
		left to the previous cell.   	
		If the head  reads $0$ or $1$, it writes $C_0$ or $C_1$, respectively. Finally $\mathrm{TM}_a$ halts.} 
\end{enumerate}

\noindent {\bf Case 4.} Suppose $S=D_3$. We divide 
a routine for this into three stages.
\begin{enumerate}[1.]
	\item{The first stage is exactly the same as the 
		first stage in the case 3.}
	\item{The following subroutine is repeated 
		four times:
		\begin{enumerate}[(a)]
			\item{The head associated to the second tape moves 
				left to the previous cell.}
			\item{The head associated to the first tape 
				moves left. For each move, if 
				$\mathrm{ERASE}$ and the head 
				reads $0$, it writes $\boxdot$. 
				If $\mathrm{ERASE}$ and the head 
				reads $1$, $\mathrm{TM}_a$ sets 
				$\mathrm{ERASE}=\mathrm{false}$.
				Simultaneously, the head associated to 
				the second tape moves first left until 
				it reads $\boxplus$ and then right it 
				reads $\boxdot$.}      
	\end{enumerate}}     
	\item{The head associated to the first tape moves left to the previous cell making the last $(8i+5)$th move. If the head reads $0$ or $1$, it writes $C_0$ or $C_1$, respectively. Finally $\mathrm{TM}_a$ halts.}      	
\end{enumerate}
Clearly, the runtime of Algorithm \ref{Z2wrZ2_alg1} 
is linear. Moreover, for each of the cases 1--4 
the routine requires at most linear time. 
So $\mathrm{TM}_a$ halts in linear time for 
every input $u \in \Sigma^*$. If $u \in L$, 
$\mathrm{TM}_a$ halts with the output 
$v \in L$ written on the first tape 
for which $\psi(u)a = \psi(v)$. 
In the same way one can construct 
two--tape Turing machines computing the 
right multiplication by $a^{-1}$ and 
$b^{\pm 1}$ which halt in linear time on 
every input. Thus we have the following 
theorem. 

\begin{thm} 
	\label{Z2wrZ2_2_tape_thm}	
	The wreath product $\mathbb{Z}_2 \wr \mathbb{Z}^2$ 
	is Cayley $2$--tape linear--time computable. 
\end{thm}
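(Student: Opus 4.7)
The plan is to assemble the constructions already outlined before the theorem statement and verify that they meet Definition \ref{def_cyaley_lin}. The generating set is $S=\{a,a^{-1},b,b^{-1},c\}$, so I need to exhibit, for each $s\in S$, a two-tape Turing machine $\mathrm{TM}_s$ that transforms the normal form $u$ of $g\in\mathbb{Z}_2\wr\mathbb{Z}^2$ into the normal form of $gs$ in time $O(|u|)$. The case $s=c$ is already handled by the one-tape toggle that scans left to right and swaps $C_0\leftrightarrow C_1$ at the unique occurrence of a marker; this is a fortiori a two-tape linear-time machine.

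For $s=a$, I would use Algorithm \ref{Z2wrZ2_alg1} as phase one and the four-case post-processing as phase two. The correctness argument splits into two parts. First, I would prove by induction on $k$ that, at the moment the head of tape one reads the $k$-th symbol of $u$, the finite-state variable $S$ equals the unique region in $\{O,\ell_1,\ell_2,\ell_3,\ell_4,D_1,D_2,D_3,D_4\}$ containing $t(k)$, and the content of tape two is $\boxplus T^i\boxdot^{\infty}$ with $i$ the number of turns completed so far; the inductive step uses the geometric observation recorded before the algorithm, namely that each traversal around the origin reads $\ell_1$, then $2i$ cells of $D_1$, then $\ell_2$, then $2i+1$ cells of $D_2$, and so on. Second, once the lamplighter symbol is reached, the four-case branching together with the unary counter $T^i$ writes the new marker in the cell offset by $+1$, $+(8i+9)$, $-1$, or $-(8i+5)$ respectively, padding with $0$s on the right or erasing a trailing block of $0$s via the $\mathrm{ERASE}$ flag as required to keep the output in $L$.

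For the running time analysis, phase one makes a single left-to-right pass of tape one of length at most $|u|$, and each entry into a new region triggers a left-then-right rewind of tape two that costs $O(i)$; since the spiral satisfies $k\geqslant 2i^2$ for $t(k)$ at turn $i$, these rewinds telescope to $O(|u|)$ total. Phase two consists of at most four sweeps of length $2i+O(1)$ each, which is again $O(|u|)$. The three remaining machines $\mathrm{TM}_{a^{-1}}$, $\mathrm{TM}_b$, $\mathrm{TM}_{b^{-1}}$ are constructed by the same template, with the region-to-displacement table obtained by the obvious symmetry of the spiral under the actions of $a^{-1}$, $b$, and $b^{-1}$; the linear-time analysis is identical.

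The main obstacle is the tape-two synchronization in phase one: each rewind of the counter $T^i$ must take exactly the number of steps the head of tape one spends crossing the corresponding $D_j$-block, and the counter must be incremented at precisely the moment the cursor re-enters $\ell_1$. Verifying this invariant, together with the boundary cases at the first few vertices where $i=0$, is where genuine care is required; after that, invoking Definition \ref{def_cyaley_lin} with the five machines above closes the proof.
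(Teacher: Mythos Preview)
Your proposal is correct and follows essentially the same approach as the paper: the paper assembles exactly the same ingredients (the one-tape toggle for $c$, Algorithm~\ref{Z2wrZ2_alg1} plus the four-case post-processing for $a$, and symmetric variants for $a^{-1},b^{\pm1}$), and then simply asserts that each phase runs in linear time and that the output lies in $L$. Your write-up is in fact more explicit than the paper's on two points---the inductive invariant for the region variable $S$ and the turn counter, and the telescoping estimate $\sum_{j\leqslant i}O(j)=O(i^2)=O(|u|)$ for the tape-two rewinds---but these are elaborations of the same argument rather than a different route.
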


 \section{The Wreath Product 
	$\mathbb{Z}_2 \wr \mathbb{F}_2$}
\label{section_Z2wrF2}

In this section we show that the group 
$\mathbb{Z}_2 \wr \mathbb{F}_2$ is 
Cayley $2$--tape linear--time computable.    
Every group element of 
$\mathbb{Z}_2 \wr \mathbb{F}_2$ 
can be written as a 
pair $(f,z)$, where $z \in \mathbb{F}_2$ and
$f : \mathbb{F}_2 \rightarrow \mathbb{Z}_2$ is a function for which $f(\xi)$ is the 
non--identity element of 
$\mathbb{Z}_2$ for at most finitely many 
$\xi \in \mathbb{F}_2$. 
We denote by $c$ the non--identity element of 
$\mathbb{Z}_2 = \{e,c\}$ and by $a,b$ 
the generators of 
$\mathbb{F}_2 = \langle a,b \rangle$. 
The group $\mathbb{Z}_2$ is canonically 
embedded in $\mathbb{Z}_2 \wr \mathbb{F}_2$ 
by mapping $c$ to $(f_c,e)$, where 
$f_c : \mathbb{F}_2 \rightarrow 
\mathbb{Z}_2$ is a function 
for which $f_c(x) = e$ for all 
$x \neq e$ and $f_c (e)=c$.  
The group $\mathbb{F}_2$ is canonically 
embedded in 
$\mathbb{Z}_2 \wr \mathbb{F}_2$ by 
mapping $x \in \mathbb{F}_2$ to 
$(f_e,x)$, where $f_e (y) =e$ for 
all $y \in \mathbb{F}_2$.          
Therefore, we can identify  
$a,b$ and $c$ with the corresponding 
group elements 
$(f_e,a)$, $(f_e,b)$ and 
$(f_c,e)$ in 
$\mathbb{Z}_2 \wr \mathbb{F}_2$, respectively. 
The group  $\mathbb{Z}_2 \wr \mathbb{F}_2$ 
is generated by $a,b$ and $c$, so 
$S = \{a,a^{-1},b,b^{-1},c\}$ is a set of 
its semigroup generators. 
The formulas for the right multiplication  
in $\mathbb{Z}_2 \wr \mathbb{F}_2$
by $a,a^{-1},b,b^{-1}$ and $c$ are as 
follows. For a given 
$g = (f,z) \in \mathbb{Z}_2 \wr \mathbb{F}_2$, 
$ga = (f,za)$, $ga^{-1} = (f,za^{-1})$, 
$gb = (f,zb)$, $gb^{-1} = (f,zb^{-1})$ 
and $gc = (f',z)$, where 
$f'(x)= f(x)$ for all $x \neq z$ and 
$f'(z) = f(z)c$.

\emph{Normal form.} We will use 
a normal form for elements of $\mathbb{Z}_2 \wr \mathbb{F}_2$ described in \cite{BK15}.    
Each element of $\mathbb{F}_2$ is 
identified uniquely with a reduced word over the alphabet $\{a,b,a^{-1},b^{-1}\}$.  
We denote by $F_a$ and $F_b$ the sets of elements of 
$\mathbb{F}_2$ for which the reduced words are of the form 
$a^{\pm 1}w$ and $b^{\pm 1}w$, respectively. 
Clearly, 
$\mathbb{F}_2  = F_a \cup F_b \cup \{ e \}$.  
Let $F_a'$ be a set of elements of $\mathbb{F}_2$ 
for which the reduced words are of the form 
$wa^{\pm1}$ or the empty word $\varepsilon$, and 
let $F_b'$ be a set of elements of $\mathbb{F}_2$ 
for which the reduced words are of the form 
$wb^{\pm1}$. Clearly, $\mathbb{F}_2 = F_a' \cup F_b'$.   
For given $s \in F_a'$ 
and $g = (f,z) \in \mathbb{Z}_2 \wr \mathbb{F}_2$,
we denote by $V_s$ the set 
$V_s = \{ p \in F_b \,| \, f(sp)=c \vee z=sp\}$. 
Similarly, for a given $t \in F_b'$ and 
$g = (f,z) \in \mathbb{Z}_2 \wr \mathbb{F}_2$,  
we denote by $H_t$ the set 
$H_t = \{p \in F_a \,|\, f(tp)=c \vee z=tp \}$.  
Let $\Sigma$ be the alphabet consisting of   
ten main symbols: $0$, $1$, $D_0$, $D_1$, $E_0$, 
$E_1$, $($, $)$, $[$, $]$ 
and fourteen  
additional symbols: 
$D_0 ^A$, $D_1 ^A$, $D_0 ^B$, $D_1 ^B$, $D_0 ^C$, $D_1 ^C$, $E_0^C$, $E_1^C$, $A_0$, $A_1$, $B_0$, $B_1$, $C_0$ and $C_1$.      
We will refer to the symbols 
$D_0,D_1,D_0 ^A, D_1 ^A, D_0 ^B, 
D_1 ^B, D_0 ^C, D_1 ^C$
and  
$E_0,E_1,E_0^C,E_1^C$ as 
$D$--symbols and $E$--symbols, 
respectively. 
A normal form of a given element 
$g \in \mathbb{Z}_2 \wr \mathbb{F}_2 $ is 
a string over the alphabet $\Sigma$
constructed in a  recursive way as follows. 
\begin{figure}
	\centering 	
	\includegraphics[width=4.5cm]{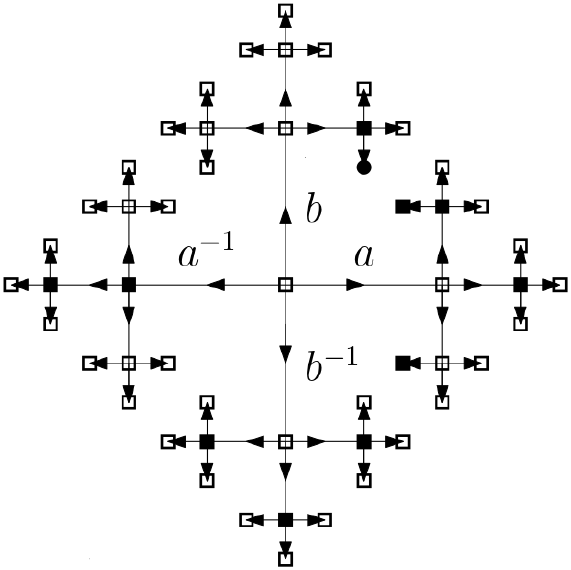}
	\caption{{\small A Cayley graph of   
			$\mathbb{F}_2$ and  
			an element 
			$(f,z) \in \mathbb{Z}_2 \wr 
			\mathbb{F}_2$. Black and 
			white squares indicate that a value of $f$ at a given point is $c$ and 
			$e$, respectively. 
			The 
			black disc 
			indicates a position of 
			the lamplighter $z$ and 
			that $f(z)=c$.}} 
	\label{freegraph2}
\end{figure}   

In the first iteration consider a cyclic subgroup  
$A = \{a^i \,|\, i \in \mathbb{Z}\} 
\leqslant \mathbb{F}_2$ which forms a horizontal 
line in a Cayley graph of $\mathbb{F}_2$ with respect 
to $a$ and $b$, see Fig.~\ref{freegraph2}.  
Scan this line from left to right checking for each $s \in A$  
whether or not $V_s \neq \varnothing$, $f(s)=c$, $s=z$ and $s=e$.  
In case $V_s \neq \varnothing$,  
$s \neq e$ and $s \neq z$, write the symbols $D_0$ or $D_1$, if 
$f(s)=e$ or $f(s)=c$, respectively. 
Similarly, in case $V_s \neq \varnothing$, $s=e$ and  $z \neq e$, 
write the symbols 
$D_0 ^A$ or $D_1 ^A$,
in case $V_s \neq \varnothing$, $s=e$ and  $z = e$, write the symbols $D_0 ^B$ or $D_1 ^B$ and in case $V_s \neq \varnothing$,  
$s = z$ and $s \neq e$, write the symbols 
$D_0 ^C$ or $D_1 ^C$, if 
$f(s)=e$ or $f(s)=c$, respectively.  
In case $V_s = \varnothing$, $s \neq e$ and $s \neq z$, write the symbols $0$ or $1$, 
if $f(s)=e$ or $f(s)=c$, respectively. 
Similarly, in case $V_s = \varnothing$, 
$s=e$ and  $z \neq e$, write the symbols 
$A_0$ or $A_1$, in case 
$V_s = \varnothing$, $s=e$ and  
$z = e$, write the symbols $B_0$ or $B_1$ and 
in case $V_s = \varnothing$, 
$s = z$ and $s \neq e$, write the symbol 
$C_0$ or $C_1$, if 
$f(s)=e$ or $f(s)=c$, respectively.  
Finally, for the obtained bi--infinite string
cut the infinite prefix and suffix 
consisting of $0$s, so the first and 
the last symbols of the remained finite string
are not $0$. 
For the element of 
$\mathbb{Z}_2 \wr \mathbb{F}_2$ shown
in Fig.~\ref{freegraph2} the resulted
string is $11D^A_0D_01$.

In the second iteration
each new $D$--symbol $\sigma$ is changed 
to a string of the form $(u^{-}\sigma u^{+})$, where the strings $u^{-}$ and $u^{+}$ are obtained as follows.
Every $D$--symbol corresponds to 
a group element $s \in \mathbb{F}_2$. 
In order to construct $u^{-}$ scan 
the elements on a vertical 
ray $B_s^{-} = \{sb^j\,|\,j<0\}$ 
from bottom to top checking
for each 
$t \in B_s^{-}$
whether or not 
$H_t \neq \varnothing$, $f(t)=c$ and 
$t=z$. In case $H_t \neq \varnothing$ and 
$t \neq z$, 
write the symbols $E_0$ or $E_1$, if 
$f(t)=e$ or $f(t)=c$, respectively. 
Similarly, in case $H_t \neq \varnothing$ and $t=z$, write the symbols $E_0 ^C$ or $E_1 ^C$,  
in case 
$H_t = \varnothing$ and $t \neq z$, write the symbols $0$ or $1$ and  
in case 
$H_t = \varnothing$ and  $t=z$, write the symbols 
$C_0$ or $C_1$, if $f(t)=e$ or $f(t)=c$, respectively. 
Finally, for the obtained infinite 
string cut the infinite prefix of $0$s, 
so the first symbol of the remained finite 
string $u^{-}$ is not $0$. 
In a similar way
a string $u^{+}$ is constructed by scanning the 
elements on a vertical ray 
$B_s^{+} = 
\{s b^j \, |\, j>0 \}$ from bottom to top and 
cutting the infinite suffix consisting of $0$s. 
For the element shown in Fig.~\ref{freegraph2} the resulted string is $11(1E_0D_0^AE_0)(E_0D_0E_1)1$.

In the third iteration each new 
$E$--symbol $\mu$ is changed to 
a string of the form 
$[v^{-}\mu v^{+}]$, where the strings
$v^{-}$ and $v^{+}$ are obtained as 
follows. Every $E$--symbol corresponds
to a group element 
$t \in \mathbb{F}_2$. 
In order to construct $v^{-}$ scan the 
elements on a horizontal ray 
$A_t^{-}= \{ta^{i}\,|\, i<0\}$
from left to right
checking 
for each  
$s \in A_t ^{-}$  whether or not 
$V_s \neq \varnothing$, 
$f(s)=c$ and $s=z$. 
In case $V_s \neq \varnothing$
and $s \neq z$, write the 
symbols $D_0$ or $D_1$, if $f(s)=e$ or 
$f(s)=c$, respectively. Similarly, in case $V_s \neq \varnothing$ and 
$s=z$, write the symbols 
$E_0^C$ or $E_1^C$, if $f(s)=e$ or 
$f(s)=c$, respectively. 
In case $V_s = \varnothing$ and $s \neq z$,
write the  symbols $0$ or $1$, if 
$f(s) = e$ or $f(s)=c$, respectively. 
Similarly, in case $V_s = \varnothing$ and 
$s=z$, write the symbols 
$C_0$ or $C_1$, if $f(s)=e$ or 
$f(s)=c$, respectively. Finally, 
for the obtained infinite string cut the infinite 
prefix consisting of $0$s, 
so the first symbol of the remained finite string $v^{-}$ is not 
$0$. In a similar way a string 
$v^{+}$ is constructed 
by scanning the 
elements on a horizontal ray 
$A_t ^{+}= \{ta^i\,|\,i>0 \}$ from 
left to right and cutting the infinite suffix 
consisting of $0$s. For the element in 
Fig.~\ref{freegraph2} the resulted 
string is 
$11(1[1E_01]D_0^A[E_0D_1])([1E_0]D_0
[1E_1])1$.  

This process is then repeated 
recursively until no new $D$ or 
$E$--symbols appear: 
for $i>1$ the $(2i)$th and the $(2i+1)$th 
iterations are performed exactly 
as the second and the third iterations described above, 
respectively. 
For the element 
in Fig.~\ref{freegraph2} the resulted string is 
$11(1[1E_01]D_0^A[E_0(C_1D_1)])
([1E_0]D_0 [1E_1])1$. 

We remark that the symbols 
$C_0, C_1, D_0 ^C, D_1 ^C, E_0 ^C, E_1 ^C$ and 
$A_0, A_1, D_0 ^A, 
D_1 ^A$
are used to mark the position of the 
lamplighter $z \in \mathbb{F}_2$ and 
the identity $e \in \mathbb{F}_2$, 
respectively,  
when $z \neq e$.  
The symbols $B_0,B_1,D_0 ^B, D_1 ^B$ 
are used to mark the position of the 
lamplighter and the identity when $z=e$. 

For a given group element 
$g = (f,z) \in 
\mathbb{Z}_2 \wr \mathbb{F}_2$, let 
$u \in \Sigma^*$ be a normal form of $g$ obtained 
by a recursive procedure described
above. We denote by $L \subseteq \Sigma^*$  
a language of all such normal forms. 
The described normal form of 
$\mathbb{Z}_2 \wr \mathbb{F}_2$ 
defines a bijection 
$\psi :   L \rightarrow \mathbb{Z}_2 \wr  \mathbb{F}_2$  
mapping $u \in L$ to the 
corresponding group element 
$g  \in \mathbb{Z}_2 \wr \mathbb{F}_2$. 
This normal form is quasigeodesic 
\cite[Theorem~5]{BK15}.

\emph{Construction of Turing machines computing  the right multiplication in 
	$\mathbb{Z}_2 \wr \mathbb{F}_2$
	by $a^{\pm 1}, b^{\pm 1}$ and $c$.}   
For the right multiplication 
in $\mathbb{Z}_2 \wr \mathbb{F}_2$ by $c$, 
consider a one--tape Turing machine 
which reads the input $u \in \Sigma^*$ from left 
to right and when the head reads a symbol
$D_i ^C$, $E_i ^C $, $D_i ^B$, $C_i$ 
or $B_i$, 
$i=0,1$, it changes this symbol to 
$D_j ^C$, $E_j ^C $, $D_j ^B$, $C_j$ 
or $B_j$, 
respectively, for $j= i +1 \mod\, 2$, 
and then it halts. If the head reads
the blank symbol $\boxdot$, which indicates 
that the input $u$ has been read, it halts.    
This Turing machine halts in linear time 
for every input $u \in \Sigma^*$. 
Moreover, if the input 
$u \in L$,  it
computes the output $v \in L$
for which $\psi (u) c =\psi (v)$.

Let us describe a two--tape Turing 
machine computing the right 
multiplication by $a$ 
in  $\mathbb{Z}_2 \wr \mathbb{F}_2$ 
which halts in linear time on every 
input in $u \in \Sigma ^*$.
We refer to this Turing 
machine as $\mathrm{TM}_a$.        
We refer to the symbols $C_0,C_1,D_0 ^C, D_1 ^C, E_0 ^C, E_1 ^C$ 
as $C$--symbols, $B_0,B_1,D_0 ^B, D_1 ^B$ as $B$--symbols and 
$(,),[,]$ as brackets symbols.

\begin{algo}[First iteration]  
	\label{Z2wrF2_alg_first_iteration}
	Initially for $\mathrm{TM}_a$ 
	a content of the first tape is 
	$\boxplus u \boxdot^\infty$ with a
	head over $\boxplus$. A content of the 
	second tape is $\boxplus \boxdot^\infty$
	with a head over $\boxplus$. 
	In the first iteration $\mathrm{TM}_a$ 
	moves a head associated to the first tape from 
	left to right until it reads either 
	$C$ or $B$--symbol. 
	The second tape is used 
	as a stack for storing the bracket symbols
	simultaneously checking their configuration:
	\begin{itemize}
		\item{If a head on the first tape reads 
			the symbol $($ or $[$, 
			on the second tape a head 
			moves right to the next cell
			and writes this symbol;} 
		\item{If a head on the first tape reads
			the symbol $)$ or $]$,
			$\mathrm{TM}_a$ checks if 
			a head on the second tape 
			reads $($ or $[$, respectively. If not, 
			$\mathrm{TM}_a$ halts.  
			Otherwise, on the second tape
			a head writes the blank symbol 
			$\boxdot$ and moves left to the previous cell.}
	\end{itemize}	
	If on the first tape a head   
	does not read a $C$ or $B$--symbol, 
	$\mathrm{TM}_a$ halts after a head reads a blank symbol 
	$\boxdot$  indicating that the input has been read.
\end{algo}

\noindent  Let $S$ be the symbol 
a head associated to the first tape 
reads at the end of the first iteration.
In the second iteration 
$\mathrm{TM}_a$ works depending
on the symbol $S$. There are three cases to consider: $S \in \{ D_0^C,D_1 ^C, D_0 ^B, 
D_1 ^B\}$, 
$S \in \{ E_0 ^C, E_1 ^C\}$ and 
$S \in \{C_0,C_1,B_0,B_1\}$.

\noindent  {\bf Case 1.} Suppose 
$S \in \{ D_0^C,D_1 ^C, D_0 ^B, D_1 ^B\}$. 
We  notice that if the input $u \in L$ and 
$S \in \{ D_0^C,D_1 ^C, D_0 ^B, D_1 ^B\}$, then on the second 
tape a head must read the left bracket symbol 
$($. So if it is not the left bracket symbol $($, 
$\mathrm{TM}_a$ halts. If it is 
the left bracket symbol $($, 
$\mathrm{TM}_a$ continues working as 
described in Algorithm \ref{alg_case_1}. 

\begin{algo}[Second iteration for Case 1]  
	\label{alg_case_1}   
	First $\mathrm{TM}_a$ marks the left bracket symbol $($ on the second tape 
	by changing it to 
	$\mathring{(}$.   
	On the first tape 
	it  changes the symbol $S$ 
	to $D_0$, $D_1$, $D_0 ^A$ and 
	$D_1^A$  if 
	$S = D_0 ^C$, $D_1 ^C$,  
	$D_0^B$ and 
	$D_1^B$, respectively. Then 
	$\mathrm{TM}_a$ proceeds as shown below.   	
	\begin{enumerate}
		\item{$\mathrm{TM}_a$
			keeps reading a content of the first tape 
			while on the second tape it works
			exactly as
			in Algorithm \ref{Z2wrF2_alg_first_iteration}
			until 
			on the first tape a head reads 
			the right bracket symbol $)$ and,
			at the same time, on  
			the second tape a head reads   
			the marked left bracket symbol  
			$\mathring{(}$. 
			If such situation does not occur, $\mathrm{TM}_a$ halts  
			after on the first tape a head reads a blank symbol 
			$\boxdot$.}
		
		\item{On the first tape a head moves
			right to the next cell and  
			on the second tape a head writes the blank 
			symbol $\boxdot$. Now let $S'$ be the symbol that a head associated to the first tape reads. If $S' \notin  
			\{ 0,1,E_0,E_1,A_0,A_1,\boxdot, ], ( \, \}$, $\mathrm{TM}_a$ halts. Otherwise, 
			depending on $S'$,
			$\mathrm{TM}_a$ proceeds  as follows.
			\begin{enumerate} 
				\item{ If $S' = 0, 1, E_0, E_1, A_0$ or
					$A_1$, $\mathrm{TM}_a$ changes it 
					to $C_0$, $C_1$, $E_0 ^C$, $E_1 ^C$, 
					$B_0$ or $B_1$, respectively, and 
					then halts.}        
				\item{If $S' = \boxdot$, $\mathrm{TM}_a$	changes it to $C_0$ and then halts.}
				\item{If $S' = ($, 
					on the second tape a head writes 
					the marked left bracket 
					$\mathring{(}$ and on the first
					tape a head moves right to the
					next cell. After that 
					$\mathrm{TM}_a$ 
					keeps reading a content of the first tape 
					while on the second tape it works
					exactly like 
					in Algorithm \ref{Z2wrF2_alg_first_iteration}
					until a head associated 
					to the first tape 
					reads a symbol 
					$Q= D_0, D_1, D_0^A$ or $D_1 ^A$ 
					and, at the same time, a head associated to the second tape reads 
					the marked left bracket 
					$\mathring{(}$. If such situation does not    occur, $\mathrm{TM}_a$ halts after  
					on the first tape a head reads a blank symbol $\boxdot$.
					Finally  $\mathrm{TM}_a$ changes 
					the symbol $Q$ to 
					$D_0 ^C$, $D_1 ^C$, 
					$D_0 ^B$ or $D_1 ^B$, if 
					$Q= D_0, D_1, D_0^A$ or $D_1 ^A$,
					respectively, and then it halts.}
				\item{If $S' = \, ]$, 
					$\mathrm{TM}_a$ 
					shifts  a non--blank content of the first 	tape starting  
					with this right bracket symbol $]$ by one position to the right
					and writes $C_0$ before it. 
					Then $\mathrm{TM}_a$ halts.}   
			\end{enumerate}
		}      	
		
	\end{enumerate}

\end{algo}	  

\noindent {\bf Case 2.} 
Suppose $S \in \{E_0 ^C, E_1 ^C \}$. 
$\mathrm{TM}_a$ continues working as described in Algorithm \ref{alg_case_2}. 
\begin{algo}[Second iteration for Case 2]   
	\label{alg_case_2}   
	On the first tape $\mathrm{TM}_a$ changes 
	$S$ to $E_0$ and $E_1$, if 
	$S = E_0 ^C$ and $S = E_1 ^C$,    
	respectively, and moves a head 
	right to the next cell.   
	Let $S'$ be the symbol that  
	a head associated to 
	the first tape reads. 
	If $S' \notin \{0,1,(,]\}$, 
	$\mathrm{TM}_a$ halts. Otherwise, 
	depending on $S'$, $\mathrm{TM}_a$ proceeds 
	as shown below. 
	\begin{enumerate}[(a)] 
		\item{If $S' =0$ or $1$,
			$\mathrm{TM}_a$ changes it to 
			$C_0$ or $C_1$, respectively, and 
			then halts.} 
		\item{If $S' = ($, 
			on the second tape a head writes 
			the marked left bracket 
			$\mathring{(}$ and on the first
			tape a head moves right to the next cell. Then  
			$\mathrm{TM}_a$ 
			keeps reading a content of the first tape 
			while on the second tape it works
			exactly as 
			in Algorithm \ref{Z2wrF2_alg_first_iteration}
			until a head associated 
			to the first tape 
			reads a symbol 
			$Q= D_0$ or $D_1$ 
			and, at the same time, a head associated to the second tape reads 
			the marked left bracket 
			$\mathring{(}$. If such situation does not  occur, $\mathrm{TM}_a$ halts after  
			on the first tape a head reads a blank symbol $\boxdot$.
			Finally  $\mathrm{TM}_a$ changes 
			the symbol $Q$ to 
			$D_0 ^C$ or $D_1 ^C$, if 
			$Q= D_0$ or $D_1$,
			respectively, and halts.}
		
		\item{If $S'= \,]$, $\mathrm{TM}_a$ 
			works exactly as in the 
			case $d$ for the second stage 
			of Algorithm \ref{alg_case_1}. 
		}
	\end{enumerate}	
\end{algo}

\noindent {\bf Case 3.}
Suppose $S \in  \{C_0, C_1, B_0, B_1 \}$.
Let $P$ be the symbol a head associated to 
the second tape reads. 
We notice that if the input $u \in L$ and 
$S \in \{C_0, C_1\}$, then
$P \in \{ (,[ , \boxplus \}$. 
If $u \in L$ and $S \in \{B_0, B_1\}$, 
then $P = \boxplus$. 
So if $S \in \{C_0, C_1\}$ and 
$P \notin \{ (,[ , \boxplus \}$ or 
$S \in \{B_0 ,B_1\}$  and
$P \neq \boxplus$, 
$\mathrm{TM}_a$ halts.
Otherwise, it continues working  
as described in Algorithm \ref{alg_case_3}.  

\begin{algo}[Second iteration for Case 3]
	\label{alg_case_3}
	
	Depending on $P$ and $S$, $\mathrm{TM}_a$	
	works as shown below. 
	\begin{enumerate}[(a)] 
		\item{If $P = ($, then $\mathrm{TM}_a$ changes 
			the symbol $S$ to the substring $w$ that is 
			equal to $[E_0C_0]$ and
			$[E_1 C_0]$, if $S =C_0$ and $S= C_1$, 
			respectively. This can be done by shifting the 
			non--blank content of the first tape following
			$S$ by three positions to the right
			and then wring $w$ before it; in particular, 
			the symbol $S$ will be overwritten by 
			the left bracket symbol $[$.}	
		\item{If $P = [$ and $S=C_1$, 
			$\mathrm{TM}_a$ changes $C_1$ to $1$ 
			and moves a head associated to the first 
			tape by one position to the right. 
			Let $S'$ be the symbol a head associated 
			to the first tape reads. 
			If $S' \notin \{0,1,E_0,E_1,(,]\}$, 
			then $\mathrm{TM}_a$ halts. Otherwise, 
			it continues working as shown below.
			\begin{itemize} 
				\item{If $S' \in \{0,1,E_0,E_1\}$, then 
					$\mathrm{TM}_a$ changes $S'$ to 
					$C_0,C_1, E_0 ^C, E_1 ^C $ if 
					$S' = 0,1, E_0, E_1$, respectively, and
					then it halts.}
				\item{If $S' = ($, then 
					$\mathrm{TM}_a$ works exactly as in
					the case b for Algorithm \ref{alg_case_2}.}
				\item{If $S' = \, ]$, $\mathrm{TM}_a$ 
					works exactly as in the 
					case $d$ for the second stage 
					of Algorithm \ref{alg_case_1}.} 	
			\end{itemize}	
		}
		\item{If $P = [$ and $S = C_0$, 
			$\mathrm{TM}_a$ moves a head 
			associated to the first tape 
			by one position to the left. 
			Let $T$ be the symbol the head 
			reads. Depending on  
			$T$, $\mathrm{TM}_a$ proceeds 
			as shown below. 
			\begin{itemize}
				\item{If $T \neq [$, $\mathrm{TM}_a$ 
					moves the head 
					by one position to the right, changes  $C_0$ to $0$ and moves 
					the head again 
					by one position to the right. 
					After that, depending on the symbol 
					$S'$ the head reads, it continues working exactly as in the case b  
					of the present algorithm.}  
				\item{If $T =[$, $\mathrm{TM}_a$  reads the two symbols following $C_0$.  Let $S'$ and  
					$S''$ be the first and the 
					seconds symbols following $C_0$, 
					respectively. 
					If $S' \in \{E_0, E_1\}$ and 
					$S''=\,]$, then 
					$\mathrm{TM}_a$ changes the 
					substring $[C_0 S']$ to the 
					symbol $C_0$ or $C_1$ if 
					$S'= E_0$ or $S' = E_1$, 
					respectively. This can be done 
					by shifting the non--blank content 
					of the first tape following the 
					substring $[C_0 S']$ by three 
					positions to the left and then 
					changing the left bracket symbol 
					$[$ to $C_0$ or $C_1$ if 
					$S' = E_0$ or $S' = E_1$, 
					respectively. 
					If $S' \notin \{E_0, E_1\}$ 
					or $S'' \neq \,]$, 
					$\mathrm{TM}_a$ shifts the 
					non--blank content of the 
					first tape following $C_0$
					by one position to the left. 
					Then, if $S' \in 
					\{0,1,E_0,E_1,(\}$,  
					$\mathrm{TM}_a$ continues working exactly as in the case b  
					of the present algorithm and halts 
					otherwise.} 
		\end{itemize} }
		\item{If $P = \boxplus$, 
			$\mathrm{TM}_a$ moves 
			a head associated to the 
			first tape by one position 
			to the left. 
			Let $T$ be the symbol 
			the head reads. 
			Depending on $S$ and $T$, 
			$\mathrm{TM}_a$ proceeds 
			as follows.  	      
			If $T \neq \boxplus$ or 
			$S \in \{ C_1, B_0 , B_1\}$,
			$\mathrm{TM}_a$ moves 
			the head by one position 
			to the right, 
			changes $S$ to $0,1, A_0$ 
			or $A_1$, if $S= C_0, C_1,
			B_0$ or $B_1$, respectively,  
			and moves the head 
			again by one position  
			to the right. 
			If $T = \boxplus$
			and $S = C_0$,  
			$\mathrm{TM}_a$
			shifts the non--blank 
			content of the first 
			tape following $C_0$ by 
			one position to the left
			erasing $C_0$ and places  
			the head over 
			the first symbol after 
			$\boxplus$.
			Now let $S'$ be a symbol 
			the head reads.  
			If   $S' \notin 
			\{0,1,A_0,A_1,\boxdot, (\}$, 
			$\mathrm{TM}_a$ halts.
			Otherwise, if  $S' \in \{0,1,A_0,A_1, 
			\boxdot\}$, 
			$\mathrm{TM}_a$ changes 
			$S'$ to $C_0,C_1,B_0,B_1$ and $C_0$ if 
			$S'= 0,1,A_0,A_1$ and 
			$\boxdot$, respectively, and then halts.    
			If $S' = ($, 
			$\mathrm{TM}_a$ moves 
			a head associated to 
			the second tape by one
			position to the right. 
			After that it continues 
			working exactly as in the case c for the stage 2 of  Algorithm \ref{alg_case_1}.}       
	\end{enumerate}		
\end{algo}	
The runtime of Algorithm  \ref{Z2wrF2_alg_first_iteration}
is linear. Also, as shifting a portion 
of a tape by a fixed number of positions 
requires at most linear time,  
for each of the  algorithms 
\ref{alg_case_1}--\ref{alg_case_3}  
the runtime is linear. 
Therefore, $\mathrm{TM}_a$ halts in 
linear time for every input $u \in \Sigma^*$.  
Moreover, if the input $u \in L$, 
$\mathrm{TM}_a$ halts with the output 
$v \in L$  for which $\psi (u)a  = \psi (v)$
written on the first tape. 
Two--tape Turing machines computing 
the right multiplication by $a^{-1}$ and 
$b^{\pm 1}$ which halt in linear time
one every input  are 
constructed in the same way as 
$\mathrm{TM}_a$  
with minor modifications. Thus we have the 
following theorem.  
\begin{thm} 
	\label{Z2wrF2_2_tape_thm}	 
	The wreath product 
	$\mathbb{Z}_2 \wr \mathbb{F}_2$ is Cayley 
	$2$--tape linear--time computable. 
\end{thm}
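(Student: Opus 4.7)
The plan is to verify that the machines described in Algorithms \ref{Z2wrF2_alg_first_iteration}--\ref{alg_case_3} correctly implement $r_a$ in linear time, and then to observe that $r_{a^{-1}}$ and $r_{b^{\pm 1}}$ admit entirely analogous constructions, while $r_c$ is handled by the one--tape machine described just before Algorithm \ref{Z2wrF2_alg_first_iteration}. By Definition \ref{def_cyaley_lin} this suffices, since $S = \{a, a^{-1}, b, b^{-1}, c\}$ is a set of semigroup generators.

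For correctness of $\mathrm{TM}_a$, I would argue case by case on the type of symbol $S$ marking the lamplighter position $z$ in the input normal form $u$. In each case the geometric operation $z \mapsto za$ on the Cayley graph of $\mathbb{F}_2$ translates into one of a small number of local rewrites of the bracket--nested string. Specifically: either (i) re-mark a neighboring symbol on the same horizontal level while adjusting subscripts between $C,B,A$ and $D/E$ counterparts, possibly inserting a constant--size block such as $[E_i C_0]$; or (ii) descend into an adjacent subtree corresponding to $a$, by reading rightward through the first tape while the second tape maintains a bracket--matching stack, stopping when the marked left bracket $\mathring{(}$ is re-encountered at the correct depth and re-marking the first $D$- or $E$-symbol there; or (iii) ascend out of the current subtree and land in the parent context, trimming redundant bracket pairs. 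Checking that the resulting string is again in $L$ amounts to confirming that the recursive definition of the normal form, together with the rule that every maximal string inside a bracket has no leading or trailing $0$, is preserved by each local rewrite. This is a finite verification performed against the defining recursion of the normal form from \cite{BK15}.

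For the linear--time bound, the first--iteration scan of Algorithm \ref{Z2wrF2_alg_first_iteration} clearly costs $O(|u|)$: each head step on the first tape is matched by at most one push or pop on the second tape. Each subroutine in Algorithms \ref{alg_case_1}--\ref{alg_case_3} performs at most one additional pass plus a constant number of shifts of a suffix of the first tape by a bounded number of positions (such as the insertion of $[E_i C_0]$ in case (a) of Algorithm \ref{alg_case_3} or the contraction $[C_0 E_i] \mapsto C_i$ in case (c)); both the pass and each shift are linear in $|u|$, and their number is bounded by a constant independent of $u$. Hence $\mathrm{TM}_a$ halts in linear time on every input.

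The main obstacle will be the subcase analysis underlying the correctness of $\mathrm{TM}_a$, particularly the transitions in which the lamplighter crosses between levels of the recursive bracket structure, e.g.\ when $S$ is a $C$-symbol immediately to the right of a $[$ and the neighbor $za$ lies outside the current subtree, so that case (c) or (d) of Algorithm \ref{alg_case_3} must both restore the canonical form (no leading/trailing $0$'s inside any bracket) and correctly reattach the lamplighter marker in the parent context. Once these boundary cases are checked, the machines $\mathrm{TM}_{a^{-1}}$, $\mathrm{TM}_b$ and $\mathrm{TM}_{b^{-1}}$ are obtained by symmetry: $a^{-1}$ is the left/right mirror of $a$ on the horizontal line through $z$, while $b^{\pm 1}$ interchanges the roles of the round and square brackets and of the $D$- and $E$-symbols, corresponding to a shift by one in the parity of the recursion depth. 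The same runtime analysis then yields the theorem.
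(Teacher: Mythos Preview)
Your proposal is correct and follows essentially the same approach as the paper: the paper concludes the proof immediately after Algorithm~\ref{alg_case_3} by observing that Algorithm~\ref{Z2wrF2_alg_first_iteration} is linear, that the bounded--length shifts in Algorithms~\ref{alg_case_1}--\ref{alg_case_3} are linear, asserting correctness of the case analysis for $\mathrm{TM}_a$, and remarking that the machines for $a^{-1}$ and $b^{\pm 1}$ are constructed analogously with minor modifications. Your sketch of the correctness argument (the three geometric scenarios (i)--(iii) and the boundary subcases when the lamplighter crosses bracket levels) and of the symmetry yielding $\mathrm{TM}_{a^{-1}}$, $\mathrm{TM}_{b^{\pm 1}}$ spells out exactly what the paper leaves implicit.
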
	

\section{Thompson's Group $F$}  
\label{section_F}

In this section we show that Richard Thompson's 
group $F$ is Cayley $2$--tape linear--time computable. 
The  group 
$F = \langle x_0, x_1 \, | \, 
[x_0 ^{-1} x_1,x_0 ^{-1} x_1 x_0], 
[x_0^{-1} x_1, x_0^{-2} x_1 x_0 ^2]
\rangle$ 
admits the 
infinite presentation of the form:
\[ 
	F =  \langle x_0, x_1, x_2 \dots 
	\, | \,\, x_j x_i = x_i x_{j+1} \,\, \mathrm{for} \,\,
	i < j \rangle. 	
\]	
This infinite presentation provides a
standard infinite normal form for elements of 
$F$ with respect to  
generators  $x_i$, $i \geqslant 0$, 
discussed by Brown and Geoghegan 
\cite{Brown_Geoghegan}.   
Namely, applying the relations 
$x_j x_i = x_i x_{j+1}$ for $i <j$, 
a group element 
$g \in F$ can be 
written uniquely as: 
\[ 
	\label{inf_normal_form}	
	x_{i_0}^{e_0} x_{i_1}^{e_1} 
	\dots x_{i_m}^{e_m} 
	x_{j_n}^{-f_n} \dots 
	x_{j_1} ^{-f_1} x_{j_0} ^{-f_0},
\]	
where:
\begin{itemize} 
	\item{$0 \leqslant i_0
		< i_1 < i_2 <
		\dots < i_m$
		and 
		$0 \leqslant j_0 
		< j_1 < j_2 < \dots < j_n$;} 	
	\item{$e_i,f_j>0$ for 
		all $i,j$;}
	\item{if $x_i$ and
		$x_i^{-1}$ are 
		both present in 
		the expression, 
		then so is $x_{i+1}$ or $x_{i+1}^{-1}$.}	  
\end{itemize}	

For other equivalent interpretations of
Thompson's group $F$,  
as the set of piecewise
linear homomorphisms of 
the interval $[0,1]$ and
as the set of pairs of reduced finite rooted 
binary trees, we refer the reader to 
\cite{Thompsons_intro}.

\emph{Normal form.} Based on the standard 
infinite normal form \eqref{inf_normal_form}
Elder and Taback
\cite{ElderTabackThompson} 
constructed 
a normal form for elements of $F$
over the alphabet 
$\Sigma = \{a,b,\#\}$ as follows. 
Let $M=\max \{i_m,j_n\}$.
First let us rewrite 
\eqref{inf_normal_form} 
in the form 
such that 
every generator 
$x_i, i=0,\dots,M$ 
appears twice: 
\[ 
	\label{ET_norm_form_pre}
	x_0^{r_0} x_1^{r_1} 
	x_2^{r_2} \dots 
	x_M^{r_M} x_M^{-s_M}
	\dots x_2 ^{-s_2}
	x_1 ^{-s_1} x_0^{-s_0},
\]	
where $r_i,s_i \geqslant 0$, exactly one of 
$r_M,s_M$ is nonzero, 
and $r_i s_i >0$ 
implies 
$r_{i+1} + s_{i+1}>0$.  
After that we rewrite \eqref{ET_norm_form_pre} 
in the following form: 
\[
	\label{ET_norm_form_final}	
	a^{r_0} b^{s_0} \# a^{r_1} b^{s_1} \# \dots \# a^{r_M} b^{s_M},
\]	
where again $r_i,s_i \geqslant 0$, exactly one of 
$r_M,s_M$ is nonzero, 
and $r_i s_i >0$ implies 
$r_{i+1} + s_{i+1}>0$. 
We denote by $L_\infty$ the language of all strings 
of the form \eqref{ET_norm_form_final}.  
The language $L_\infty$ is regular 
\cite[Lemma~3.1]{ElderTabackThompson}.
Following the notation in \cite{ElderTabackThompson},
for a given $u = a^{r_0} b^{s_0} \# a^{r_1} b^{s_1} \# \dots \# a^{r_M} b^{s_M}$ 
from the language $L_\infty$ we denote by 
$\overline{u}$ the corresponding group element 
$x_0^{r_0} x_1^{r_1} 
x_2^{r_2} \dots 
x_M^{r_M} x_M^{-s_M}
\dots$ $x_2 ^{-s_2}
x_1 ^{-s_1} x_0^{-s_0}$ in $F$.  
The described normal form of 
$F$ defines a bijection between  
$L_\infty$ and $F$ mapping $u$ 
to $\overline{u}$.
This normal form is quasigeodesic \cite[Proposition~3.3]{ElderTabackThompson}

\emph{Construction of Turing machines computing 
	the right multiplication in $F$ by $x_0^{\pm 1}$ and 
	$x_1^{\pm 1}$.}     
By \cite[Proposition~3.4]{ElderTabackThompson}
the language 
$L_{x_0^{-1}} = \{\otimes (u , v) \, | \, 
u, v \in L_\infty, 
\overline{u} x_0 ^{-1} =_F 
\overline{v}\}$ is regular.                     
This implies that there is a linear--time 
algorithm that from a given input
$u \in L_\infty$ computes the output 
$v \in L_\infty$ such that   
$\overline{u} x_0 ^{-1} =_F 
\overline{v}$; see, e.g.,  
\cite[Theorem~2.3.10]{Epsteinbook}. 
Moreover, this algorithm can be 
done in linear time on 
a one--tape Turing machine 
\cite[Theorem~2.4]{Stephan_lmcs_13}. 
Thus we only have to analyze the right 
multiplication by $x_1^{\pm 1}$. 
Below we will show that the right 
multiplication in the group $F$ by $x_1 ^{\pm 1}$ 
can be computed in linear time on a $2$--tape 
Turing machine.


We denote by $w$ the infinite normal 
form \eqref{inf_normal_form} for a 
group element $g \in F$.
We denote by 
$u$ and $v$ the normal forms 
\eqref{ET_norm_form_final} for 
$g$ and $g x_1^{-1}$, respectively; 
that is, $\overline{u} =g$ and 
$\overline{v} = g x_1 ^{-1}$.  
Let us describe multi--tape\footnote{
	Describing
	$\mathrm{TM}_{x_1^{-1}}$ and $\mathrm{TM}_{x_1}$ 
	we allow them
	to have as many tapes as needed. However, later we notice 
	that two tapes are enough for computing the right 
	multiplication by $x_1^{-1}$ and $x_1$ in linear 
	time.} 
Turing machines
computing the right multiplication
by $x_1 ^{-1}$ and 
$x_1$ in $F$  which halts in linear time on 
every input in $\Sigma^*$. We refer to these Turing machines  
as $\mathrm{TM}_{x_1^{-1}}$ and $\mathrm{TM}_{x_1}$, respectively.  
Initially for $\mathrm{TM}_{x_1^{-1}}$  
a content 
of the first tape is 
$\boxplus u \boxdot^\infty$ with the
head over $\boxplus$. 
For $\mathrm{TM}_{x_1}$  
a content 
of the first tape is 
$\boxplus v \boxdot^\infty$ with the
head over $\boxplus$.
A content for each of the other tapes
is $\boxplus \boxdot^\infty$
with the head over $\boxplus$. 
We may assume that the input is in the regular language 
of normal forms $L_\infty$. This can be verified in 
linear time by 
reading the input on the first tape. 
If the input is not in $L_\infty$, a Turing machine halts. Otherwise, a head associated to the first tape returns to its initial 
position over the $\boxplus$ symbol.

The general descriptions of 
$\mathrm{TM}_{x_1^{-1}}$ and 
$\mathrm{TM}_{x_1}$ are as follows.  
For the input $u$ 
a Turing machine
$\mathrm{TM}_{x_1^{-1}}$
verifies each of the cases 
described by Elder and Taback in \cite[Proposition~3.5]{ElderTabackThompson}
one by one. 
Once it finds a valid case, it runs a 
subroutine computing $v$ from $u$ and 
writes  it on the first tape. Then $\mathrm{TM}_{x_1^{-1}}$
halts. 
For the input $v$ a Turing machine 
$\mathrm{TM}_{x_1}$ first copies it on
the second tape where it is stored until 
it halts.   
Then $\mathrm{TM}_{x_1}$ tries each of the 
cases one by one. For the case being tried 
it runs a subroutine computing $u$ 
from $v$ written on the second tape  
and writes the output $u$ 
on the first tape. Then it verifies whether 
or not
the case being tried is valid for $u$. If it is valid, 
then it runs the corresponding subroutine
for $\mathrm{TM}_{x_1^{-1}}$  
computing $v$ from $u$ 
and writes it on a third tape; otherwise, it tries the next case. Then $\mathrm{TM}_{x_1^{-1}}$ verifies whether or not the contents of 
the second and the third tapes are the same. If they are the same, then $\mathrm{TM}_{x_1^{-1}}$ halts; otherwise, $\mathrm{TM}_{x_1^{-1}}$ tries the next case. As there are only finitely 
many cases to try, $\mathrm{TM}_{x_1^{-1}}$ will halt with
the string $u$ written on the first tape.      
For each of the cases in \cite[Proposition~3.5]{ElderTabackThompson} we describe
a subroutine for its validity verification, 
a subroutine for computing $v$ from $u$ 
and a subroutine for computing $u$ from $v$.

\noindent {\bf Case 1}: Suppose that $s_0 = 0$ or, equivalently, 
the infinite normal form $w$ does not contain $x_0$ 
to a negative exponent.
That is, $u$ is either of the form $u = a^{r_0}\#\gamma$
for $\gamma \in \Sigma^*$ or $u = a^{r_0}$, where 
$r_0 \geqslant 0$.   
This case can be verified 
by reading $u$.   
There are the following three cases to consider.

\noindent {\bf Case 1.1}: 
The normal form $u$ is of the form $u = a^{r_0}$ for 
$r_0 \geqslant 0$.
This case can be verified by reading $u$.  
If $u = a^{r_0}$ for $r_0 \geqslant 0$, then 
$v = a^{r_0}\#b$.               
A subroutine for computing $v$ from $u$ 
appends the suffix $\#b$ to $u$.  
A subroutine for computing $u$ from $v$ 
erases the last two symbols of $v$ by writing the blank 
symbols $\boxdot\boxdot$.

\noindent {\bf Case 1.2}:  The normal from $u$ contains at least one 
$\#$ symbol and $wx_1^{-1}$ is the infinite 
normal form for $gx_1^{-1}$.           
The latter is true if at least
one of the following conditions holds.

\begin{enumerate}[(a)]	
	\item{The expression $w$ contains no $x_1$ 
		terms to a positive exponent: $r_1 = 0$;}
	
	\item{The expression $w$ contains $x_1$ to 
		a negative power: $s_1 \neq 0$;} 
	
	\item{The expression $w$ contains $x_2$ to 
		a nonzero power: $r_2 \neq 0$ or 
		$s_2 \neq 0$.}        
\end{enumerate}
Each of these three conditions can be verified  by reading $u$.  
If $u = a^{r_0} \# a^{r_1} b^{s_1} \gamma$,
where $\gamma$ is empty or begins with $\#$, 
then $v= a^{r_0} \# a^{r_1} b^{s_1+1} \gamma$. 
A subroutine for computing $v$ from $u$ 
shifts a suffix $b^{s_1} \gamma$ by one position 
to the right and writes the $b$ symbol before it. 
A subroutine for computing $u$ from $v$ 
shifts  the suffix $b^{s_1}\gamma$ by one position to the left.

\noindent {\bf Case 1.3}: The normal form $u$ is of the form $u  = a^{r_0} \# a^{r_1} \gamma$ with 
$r_1 > 0$ and $\gamma$ is either 
empty or $\gamma = \# \# \gamma'$
for $\gamma' \in \Sigma^*$. 
This case can be trivially verified  
by reading $u$. 
The infinite 
normal form $w$ is 
$w = x_0 ^{r_0} x_1 ^{r_1} \eta$, where 
$\eta$ is either empty 
or $\eta = x_i ^{r_i} \dots x_j ^{-s_j}$
for some $i,j > 2$.
Then 
$wx_1 ^{-1} = x_0 ^{r_0} x_1 ^{r_1} 
\eta x_1^{-1} = 
x_0 ^{r_0} x_1 ^{r_1} \eta'$, where 
$\eta'$ is obtained from 
$\eta$ replacing $x_i^{\pm 1}$ by 
$x_{i-1}^{\pm 1}$. Therefore, 
for $u  = a^{r_0} \# a^{r_1} \gamma$ 
we have the following three subcases.
\begin{enumerate}[(a)]
	\item{If $r_1>1$ 
		and $\gamma$ is empty, 
		then $v=a^{r_0} \# a^{r_1-1}$. 
		A subroutine for computing 
		$v$ from $u$ erases the last symbol 
		of $u$ by writing the blank 
		symbol $\boxdot$. A subroutine for 
		computing $u$ from $v$ appends the 
		$a$ symbol to $v$.} 
	\item{If $r_1=1$ and 
		$\gamma$ is empty, then 
		$v=a^{r_0}$. A subroutine for computing
		$v$ from $u$ erases the last two symbols
		of $u$ by writing $\boxdot\boxdot$. 
		A subroutine for computing $u$ from 
		$v$ appends $\#a$ to $v$. }
	\item{If $r_1 > 1$ and 
		$\gamma= \# \# \gamma'$, 
		then $v=a^{r_0}\#a^{r_1-1}\#\gamma'$. 
		A subroutine for computing $v$ from 
		$u$ shifts the suffix $\#\gamma'$
		by two positions to the left.
		A subroutine for computing $u$ from 
		$v$ shifts the suffix $\#\gamma'$ 
		by two positions to the right 
		and writes $a\#$ before it.  }        
\end{enumerate}
All described subroutines for Cases 1 can be done in linear time on one tape. 	  

\noindent {\bf Case 2}: 
Suppose that $s_0  \neq 0$. 
That is, $u$ is either of the form $u = a^{r_0} b^{s_0} 
\# \gamma$  for $\gamma \in \Sigma^*$ 
or $u = a^{r_0} b^{s_0}$, where $r_0 \geqslant 0$ 
and $s_0 > 0$. This can be verified in linear time 
by reading $u$. 
Since $w$ ends in 
$x_0^{-1}$, $w x_1 ^{-1}$ is not 
the infinite normal form for 
$g x_1 ^{-1}$. 
Applying the relations 
$x_0^{-1}x_j ^{-1} = 
x_{j+1}^{-1}x_0^{-1}$,  $j>0$, $f_0$ times 
we obtain that if 
$1+f_0 \leqslant j_1$,  
$g x_1^{-1}$ is equal to 
$  x_{i_0}^{e_0} x_{i_1}^{e_1} 
\dots x_{i_m}^{e_m} 
x_{j_n}^{-f_n} \dots 
x_{j_1} ^{-f_1} 
x_{1 + f_0}^{-1}
x_{0} ^{-f_0}$. 
If $1+f_0 > j_1$, then applying 
the relations 
$x_{j_1}^{-1} x_j ^{-1} = 
x_{j+1}^{-1}x_{j_1}^{-1}$, $j>j_1$, 
$f_1$ times we obtain that if 
$1+ f_0 + f_1 \leqslant j_2$, 
$gx_1 ^{-1}$ is equal to 
$x_{i_0}^{e_0} x_{i_1}^{e_1} 
\dots x_{i_m}^{e_m} 
x_{j_n}^{-f_n} \dots 
x_{j_2} ^{-f_2} 
x_{1 + f_0 + f_1}^{-1}
x_{j_1} ^{-f_1} 
x_{0} ^{-f_0}$.  
This process is continued until 
the first time we obtain $x_R^{-1}$ 
where either: 
\begin{itemize} 
	\item{$R = 1 + f_0 + f_1 + \dots + f_n > j_n$, or} 
	\item{$R = 1 + f_0 + f_1 + \dots + f_{t} 
		\leqslant j_{t+1}$ for some 
		$0 \leqslant t  
		\leqslant n-1$.} 	
\end{itemize}	 
In Algorithm \ref{alg_comp_R} below we describe a
two--tape Turing machine $\mathrm{TM}_R$ that for a given normal form $u$ written on the first tape writes a string $b^R$ on the second tape.   
\begin{algo}[A subroutine for computing $R$]
	\label{alg_comp_R} 
	
	Initially a content of the first tape is 
	$\boxplus u \boxdot^{\infty}$  with a head over 
	$\boxplus$. 
	A content of the second tape is 
	$\boxplus \boxdot^{\infty}$ with a head over $\boxplus$.    
	Let\, $\mathrm{STOP}_1$ be a boolean variable
	which is  true 
	if a head on the first tape 
	reads $\boxdot$ and false otherwise,
	let\, $\mathrm{STOP}_2$ 
	be a boolean variable which is true
	if a head on the second tape reads 
	$\boxplus$ and false otherwise.
	Finally let  $\mathrm{CASE}$ be a boolean 
	variable which is true if $R > j_n$ and false 
	if $R = 1 + f_0 + \dots + f_t 
	\leqslant j_{t+1}$ for some 
	$0 \leqslant t \leqslant n-1$.
	
	\begin{enumerate}
		\item{On the first tape 
			$\mathrm{TM}_R$ moves a head by one position to the right. On the second tape
			$\mathrm{TM}_R$ moves a head by one position  
			to the right and writes the symbol $b$.}		
		\item{While $\mathrm{not}$
			$(\mathrm{STOP_1}$ $\mathrm{or}$ 
			$\mathrm{STOP_2})$: 
			\begin{enumerate}[(a)]
				\item{If a head on the first tape reads the $a$ symbol,
					on the first tape $\mathrm{TM}_R$ moves the 
					head by one position to the right.}
				\item{If a head on the first tape reads the $b$ symbol,
					on the second tape $\mathrm{TM}_R$  
					moves the head  by one position
					to the right and writes the $b$ symbol while on the 
					first tape it moves a head by one position 
					to the right.} 
				\item{If a head on the first tape reads the $\#$    
					symbol, on the second tape $\mathrm{TM}_R$
					writes the blank symbol $\boxdot$ and moves a head
					by one position to the left while  on the first 
					tape it moves a head by one position to the right.} 	
			\end{enumerate}		 	 
		}  	
		\item{To find a correct value of the boolean 
			variable $\mathrm{CASE}$ the subroutine 
			proceeds as follows.	   
			\begin{enumerate}[(a)]
				\item{If\, $\mathrm{STOP_1}$, we set 
					$\mathrm{CASE = true}$.}
				
				\item{If\, $\mathrm{not}$ $\mathrm{STOP_1}$, 
					on the first tape $\mathrm{TM}_R$ 
					checks if a head reads the $b$ symbol.
					If not, on the first tape it moves a head by one position to the right and checks again if a head 
					reads the $b$ symbol. This process is 
					continued  until 
					either a head  reads 
					$\boxdot$ or the $b$ symbol. If it reads 
					$\boxdot$, we set $\mathrm{CASE =true}$. 
					If it reads the $b$ symbol, we set 
					$\mathrm{CASE = false}$.}                    
		\end{enumerate}}	
		
		\item{Then $\mathrm{TM}_R$ erases all 
			$b$ symbols on the 
			second tape. If a head on the second tape 
			reads the $b$ symbol it 
			writes $\boxdot$ and moves a head by 
			one position to the left. This process is 
			continued until a head on the second tape reads $\boxplus$.} 
		
		\item{ Depending on the value of 
			$\mathrm{CASE}$ the subroutine proceeds as follows. 
			\begin{enumerate}[(a)]	   
				\item{Suppose $\mathrm{CASE}$. 
					First a head on the 
					second tape moves by one position to the right 
					and writes the $b$ symbol. 
					Then, if a head on the first tape reads the 
					$b$ symbol, it moves by one position to the left 
					while a head on the second tape moves by one position 
					to the right and writes the $b$ symbol.
					If a head on the first tape reads $\#$ or 
					the $a$ symbol, it moves by one position 
					to the left. This process is continued until 
					a head on the first tape reads $\boxplus$.					
				}
				\item{Suppose $\mathrm{not}$ $\mathrm{CASE}$. 
					If a head on the first tape reads the 
					$b$ symbol, it moves by one position to the left 
					while a head on the second tape moves by one position 
					to the right and writes the $b$ symbol.
					If a head on the first tape reads $\#$ or 
					the $a$ symbol, it moves by one position 
					to the left. This process is continued until 
					a head on the first tape reads $\boxplus$. }	   	  
			\end{enumerate} 	    
		}	   		
	\end{enumerate}	
\end{algo}
From Algorithm \ref{alg_comp_R} it can be seen 
that a subroutine for computing $R$ can be done 
in linear time on a two--tape Turing machine. 
Depending on the value of $\mathrm{CASE}$ we consider 
the following two cases. 

\noindent {\bf Case 2.1}: 
Suppose $\mathrm{CASE}$. That is, $R>j_n$. 
Let $M = \max \{i_m, j_n\}$.  
There are three subcases to consider: 
$R> M$, $R= M$ and $R<M$. 
Each of these three subcases can be
checked as follows. 
First note that $M$ is just the number of $\#$ 
symbols in the normal form $u$.  
So we run a subroutine which reads $u$ 
on the first tape and each time a head 
reads the $\#$ symbol, on a separate 
tape it moves a head by one position 
to the right and writes the $\#$ symbol. 
In the end of this subroutine 
the content of this separate tape is 
$\boxplus \#^M \boxdot^{\infty}$.   
Now to check whether $R>M$, $R=M$ or 
$R<M$ we can synchronously read  
the tapes $\boxplus b^R \boxdot^{\infty}$ 
and $\boxplus \#^M \boxdot^{\infty}$
with the heads initially over the 
$\boxplus$ symbols. 

(a) Suppose $R>M$. Then the infinite normal 
form of $gx_1^{-1}$ is 
\[  
	x_{i_0} ^{e_0} x_{i_1}^{m_1} \dots 
	x_{i_m} ^{e_m} x_{R}^{-1} x_{j_n}^{-f_n}  
	\dots x_{j_2}^{-f_2} x_{j_1}^{-f_1} 
	x_0 ^{-f_0},  
\]  
where $f_0 \neq 0$.  
We write $u$ in the form
$u = a^{r_0} b^{s_0} \gamma$,  
where $\gamma$ is either empty or starts 
with $\#$, ends with $a$ or $b$ and 
contains exactly $M$ $\#$ symbols. 
Then  $v = a^{r_0} b^{s_0} 
\gamma \#^{R-M}b$.
A subroutine for computing 
$v$ from $u$ appends the string 
$\#^{R-M}b$ to $u$ as follows. 
First a head on the first tape where $u$ 
is written moves to the last 
non--blank symbol.   
Then we synchronously read the tapes $\boxplus b^R \boxdot^\infty$ and $\boxplus\#^M \boxdot^\infty$ from the beginning 
until both heads are over the $\boxdot$ 
symbols. If a head on the tape $\boxplus b^R \boxdot^\infty$ reads the $b$ symbol 
but a head on the tape $\boxplus\#^M \boxdot^\infty$ reads $\boxdot$, on 
the first tape  a head moves by one position to the right and writes the $\#$ symbol. 
As a result the content of a first tape 
will be 
$\boxplus a^{r_0}b^{r_0} \gamma \#^{R-M}\boxdot^{\infty}$ with a head over 
the last $\#$ symbol.  
After that a head on the first tape 
moves by one position to right and 
writes the $b$ symbol.

A subroutine for computing $u$ from $v$ 
moves a head to the last symbol of $u$, which is $b$. Then it writes the $\boxdot$ and moves a head by one position to the left. 
If a head reads $\#$  it writes 
$\boxdot$ and moves a head by one position 
to the left. This process is continued 
until a head reads a symbol which is not
$\#$. As a result the content of a first 
tape will be $\boxplus a^{r_0}b^{s_0}\gamma\boxdot^{\infty}$.

(b) Suppose $R = M$. This can 
only occur if $R= i_m$. Then the infinite 
normal form of $gx_1^{-1}$ is either: 
\[ 
	x_{i_0}^{e_0} x_{i_1}^{e_1} 
	\dots x_{i_m}^{e_m-1} 
	x_{j_n}^{-f_n} \dots x_{j_2}^{-f_2}
	x_{j_1}^{-f_1} x_{j_0}^{-f_0} \,\, 
	\mathrm{if} \,\, e_m>1, \mathrm{or}
\]	
\[    
	x_{i_0}^{e_0} x_{i_1}^{e_1} 
	\dots x_{i_{m-1}}^{e_{m-1}} 
	x_{j_n}^{-f_n} \dots x_{j_2}^{-f_2}
	x_{j_1}^{-f_1} x_{j_0}^{-f_0} \,\, 
	\mathrm{if} \,\,  e_m=1.
\]	
The latter expression 
is an infinite normal form 
as $i_m = R \geqslant j_n +2$ and $w$ is an infinite 
normal form.  
Indeed, for Case 2.1  we have that 
$1 +  f_0 + \dots + f_{n-1} > j_n$. Therefore, 
\[
	\label{R_lower_bound}  
	R = 1 + f_0 + \dots + f_{n-1} + f_{n} 
	\geqslant j_n + 2. 
\]
If we write $u$ in the form 
$u = \gamma \#^s a^{e_m}$, 
then 
$v = \gamma \#^s a^{e_m-1}$ when
$e_m > 1$ and $v = \gamma$ if $e_m=1$.  

A subroutine for computing $v$ from $u$ 
reads $u$ to check if $e_m>1$ or $e_m=1$. 
If $e_m>1$, it erases the last $a$ symbol 
by writing $\boxdot$.  If $e_m=1$, 
it erases the last $a$ symbol 
by writing $\boxdot$ and moves a head 
by one position to the left. 
If a head reads $\#$ it writes  
$\boxdot$  and moves by one position 
to the left. This is continued until  
a head reads a symbol which is not $\#$. 
As a result the content of the first 
tape will be $\boxplus \gamma \boxdot^{\infty}$. 

A subroutine for computing $u$ from $v$ 
is a follows. First we run 
Algorithm \ref{alg_comp_R} for the input $v$. 
As result we get $\boxplus b^R \boxdot^{\infty}$
written on a second tape.
Now let $M'$ be the 
number of $\#$ symbols in $v$. Like in 
the subcase (a) of Case 2.1 we run a
subroutine that computes $M'$ and appends
$\#^{R-M'}$ to $v$ if $R \geqslant M'$; 
if $R<M'$ the consideration of this 
subcase (b) is skipped. 
Finally in the last step it appends $a$. 
As a result the content of the first tape
will be $\boxplus \gamma \#^{R-M'}a 
\boxdot^{\infty}$ if $R>M'$ and 
$\boxplus \gamma a \boxdot^{\infty}$ if $R=M'$.

(c) Suppose $R<M$. Then 
$i_m = M$. There are three subsubcases to consider.  

  1) The generator $x_R$ does not
appear in $w$. That is, 
$u$ is of the form
$u = \gamma \# a^{r_{R-1}} \# \# \eta$, 
where $\eta \in \{a, \#\}^*$;  
note that $s_{R-1}=0$ by the inequality 
\eqref{R_lower_bound}. 
This subsubcase is verified by finding the  
$R$th $\#$ symbol in $u$ and checking whether 
or not the next symbol after it is $a$.  
If it is not $a$, 
then $x_R$ does not appear in $w$.
If we write  
$u$ in the form
$u = \gamma \# a^{r_{R-1}} \# \# \eta$,
then 
$v = \gamma \# a^{r_{R-1}} 
\# b \# \eta$.   
A subroutine for computing $v$ from $u$ 
inserts the $b$ symbol before the $(R+1)$th 
$\#$ symbol -- it shifts the suffix $\# \eta$,
which begins with the $(R+1)$th $\#$ symbol,  
by one position to the right and
writes the $b$ symbol before it.  
A subroutine for computing $u$ from $v$ shifts 
the suffix $\#\eta$ by one position to the left
erasing the $b$ symbol. This can be done without 
knowing $b^R$ written on another tape: 
we read $v$ from the right to the left until 
a head reads the $b$ symbol, then  we shift 
the suffix $\# \eta$ following it by one 
position to the left. 
This is a correct algorithm   
as the suffix $\eta$ does not have any $b$ symbols.

2) The generator $x_R$ appears in $w$ together with $x_{R+1}$. 
That is, $u$ is either of the form 
$u=\gamma \# a^{r_R} \# a^{r_{R+1}}   \eta$,
with $r_R, r_{R+1} >0$ , where 
$\eta \in \{a, \#\}^*$ is either empty or begins with $\#$. 
This subsubcase is verified 
by finding the $R$th and $(R+1)$th $\#$ symbols in $u$ 
and checking whether or not both symbols after them are $a$. 
If both of them are $a$, then $x_R$ and $x_{R+1}$ 
appear in $w$. 
If we write $u$ in the
form $u=\gamma \# a^{r_R} \# a^{r_{R+1}}
\eta$, 
then $v = \gamma \# a^{r_R} b\# a^{r_{R+1}}
\eta$.
A subroutine for computing $v$ from $u$ inserts the 
$b$ symbol before the $(R+1)$th $\#$ symbol like in the 
previous subsubcase. A subroutine for computing 
$u$ from $v$ shifts the suffix 
$\# a^{r_{R+1}} \eta$ 
by one position to
the left erasing the $b$ symbol. 
Like in the previous case 
this can be done without 
knowing $b^R$ written on another tape: 
we read $v$ from the right to the left until 
a head reads the $b$ symbol, then  we shift 
the suffix following it by one 
position to the left. 

3) The generator $x_R$ appears in $w$ but $x_{R+1}$ does not appear in $w$. 
That is, $u$ is of the form 
$u = \gamma \# a^{r_R} \# \#\eta$, where 
$\eta \in \{a, \# \}^{*}$. 
This subsubcase 
is verified by checking whether or not the first symbol after 
the $R$th $\#$ symbol is $a$ and the first symbol after 
the $(R+1)$th $\#$ symbol is not $a$. If the latter is 
true, then $x_R$ appears in $w$ but $x_{R+1}$ does not. 
Now if $u = \gamma \# a^{r_R} \# \#\eta$, 
where $\eta \in \{a, \#\}^*$, 
then $v= \gamma \# a^{r_R-1} \# \eta$; 
note that for $r_R=1$, $v$ is a valid normal form by the inequality \eqref{R_lower_bound}.   
A subroutine for computing $v$ from $u$ shifts the suffix
$\# \eta$ following the $(R+1)$th $\#$ symbol by two
positions to the left erasing the subword $a\#$ that precedes 
this suffix.
A subroutine for computing $u$ from $v$ 
first runs Algorithm \ref{alg_comp_R} 
for the input $v$
writing  $\boxplus b^R \boxdot^{\infty}$
on a second tape 
and then inserts the subword $a\#$ before 
the suffix $\#\eta$ which begins 
with the $(R+1)$th $\#$ symbol; if the 
$(R+1)$th $\#$ symbol is not found in $v$ then 
the consideration of this 
subsubcase  is skipped.

\noindent {\bf Case 2.2}: Suppose $\mathrm{not}$ 
$\mathrm{CASE}$.  That is,  
$R = 1+ f_0 + f_1 + \dots + f_t  \leqslant j_{t+1}$ for some 
$0 \leqslant t \leqslant n-1$. 
Then $gx_1^{-1}$ is equal to:  
\[	
	\label{case22gx1}   
	x_{i_0}^{e_0} x_{i_1}^{e_1} \dots x_{i_m}^{e_m} 
	x_{j_n}^{-f_n} \dots x_{j_{t+1}}^{-f_{t+1}} 
	x_R ^{-1} x_{j_t} ^{-f_t} \dots x_{j_0}^{-f_0}. 
\]	
Note that $R> j_t$ by the construction of $R$.  
There are two cases to consider depending whether or not \eqref{case22gx1} is an infinite normal form.

\noindent {\bf Case 2.2.1}: Suppose that \eqref{case22gx1} is not an infinite 
normal form. This only happens when $R<j_{t+1}$ and 
for $w$ 
the generator $x_R$ is present to a positive power while $x_{R+1}$ is not present to any non--zero power. 
This situation occurs only if   
there is an index $p \leqslant m$ for which 
$i_p=R$, $i_{p+1} \neq R+1$ and $j_{t+1} \neq R+1$. That is, 
$u$ is of the form 
$u = \gamma \# a^{r_R} \# \# \eta$. 
This subcase is verified by finding 
the $(R+1)$th $\#$ symbol
and checking if the previous symbol 
is $a$ and the next symbol is $\#$.    
If we  write $u$ in the form 
$u = \gamma \# a^{r_R} \# \# \eta$, then  
$v = \gamma \# a^{r_R-1} \# \eta$.   
Note that for $r_R = 1$, 
$v$ is a valid normal form 
since $R > j_t +1$; 
this is because 
$1+ f_0 + \dots + {f_{t-1}}  > j_t$, so  
$R = 1 + f_0 + \dots + f_{t-1} + f_t > j_t +1$. 

A subroutine for computing $v$ from $u$ 
shifts the suffix $\# \eta$ following 
the $(R+1)$th $\#$ symbol by two 
positions to the left erasing the subword 
$a \#$ that precedes this suffix. 
A subroutine for computing $u$ from $v$ is the
same as for the subsubcase (c).3 of Case 2.1: 
it runs Algorithm \ref{alg_comp_R} for the input 
$v$ and then inserts the subword 
$a \#$ before the suffix $\# \eta$ 
which begins with the $(R+1)$th $\#$ 
symbol. Note that for Case 2.2.1 
$R$ must be the same for $u$ and $v$ 
as $R<j_{t+1}$.

\noindent {\bf Case 2.2.2}: Suppose that \eqref{case22gx1} is an infinite 
normal form. This happens only in the following subcases. 
\begin{enumerate}[(a)]	
	\item{$x_R^{-1}$ is already in $w$, that is, 
		$R = j_{t+1}$. 
		That is, $u$ is of the form 
		$u = \gamma \# a^{r_R} b^{s_R}  \eta$ 
		with $s_R>0$, where $\eta$ is either empty 
		or begins with $\#$. 
		This subcase is verified by finding 
		the $R$th $\#$ symbol in $u$ and 
		checking if the suffix following it 
		is of the form  
		$a^kb \mu$ for $k \geqslant 0$.  
		If we write $u$ in the form 
		$u = \gamma \# a^{r_R} b^{s_R} \eta$, 
		then $v = \gamma \# a^{r_R} b^{s_R+1} \eta$. 
		A subroutine for computing $v$ from $u$ 
		first reads the input $u$ until it finds the 
		$R$th $\#$ symbol. Then it reads the suffix 
		$a^{r_R} b^{s_R} \eta$ until it reads 
		the $b$ symbol first time. 
		After that it shifts the suffix  
		$b^{s_R}  \eta$ by one position to 
		the right and writes the $b$ symbol before it.
		A subroutine for computing $u$ from $v$ 
		first reads the input $v$ until it finds 
		the $R$th $\#$ symbol. Then it reads 
		the suffix $a^{r_R} b^{s_R+1}  \eta$ 
		until 
		it reads the $b$ symbol first time. After that 
		it erases this $b$ symbol and shifts  
		the suffix following it by one 
		position to the left.}	
	\item{$x_R^{-1}$ is not in $w$, but $x_R$ and either 
		$x_{R+1}$ or $x_{R+1}^{-1}$ are present in $w$. 
		That is, $u$ is  of the form 
		$u = \gamma \# a^{r_R} \# a^{r_{R+1}} 
		b^{s_{R+1}} \eta$ 
		with $r_R > 0$ and 
		$r_{R+1} + s_{R+1}>0$, where $\eta$ is either 
		empty of begins with $\#$. 
		This subcase is verified by finding the 
		the $R$th $\#$ symbol in $u$ and checking 
		if the suffix following it is of the form 
		$a^k \# a \mu $ or 
		$a^k \# b \mu$ for $k \geqslant 1$.        	   
		If we write $u$ in the form $u = \gamma \# a^{r_R} \# a^{r_{R+1}}   b^{s_{R+1}}  \eta$, then	                
		$v = \gamma \# a^{r_R} b\# a^{r_{R+1}} 
		b^{s_{R+1}} \eta$.
		A subroutine for computing $v$ from $u$ 
		inserts the $b$ symbol before the 
		suffix 
		$\# a^{r_{R+1}}   b^{s_{R+1}}  \eta$
		which begins with the 
		$(R+1)$th $\#$ symbol.   
		A subroutine for computing $u$ from $v$
		shifts the suffix             
		$\# a^{r_{R+1}}  b^{s_{R+1}}  \eta$ 
		by one position to the left which erases the 
		$b$ symbol preceding this suffix.}
	\item{Both $x_R$ and $x_{R}^{-1}$ are not present in $w$. 
		That is, $u$ is of the form 
		$u = \gamma \# a_{r_{R-1}}  \# \# \eta$; 
		note that $b_{R-1}=0$ because $R> j_t+1$.
		This subcase is verified by finding the 
		$R$th $\#$ symbol in $u$ and checking that
		the symbol next to it is $\#$. 
		If we write $u$ in the form 
		$u = \gamma \# a_{r_{R-1}}  \# \# \eta$, then $v = \gamma \# a_{r_{R-1}}  \# b \# \eta$. 
		A subroutine for computing $v$ from $u$
		inserts the $b$ symbol  after the $R$th 
		$\#$ symbol. A subroutine for 
		computing $u$ from $v$ shifts the suffix 
		$\# \eta$ which begins with 
		the $(R+1)$th $\#$ symbol by one position 
		to the left which erases the $b$ symbol preceding 
		this suffix.}	               
\end{enumerate}	
All described subroutines for Case 2 can be done in linear time on two tapes. Indeed, Algorithm \ref{alg_comp_R} 
requires only two tapes with the output 
$\boxplus b^R \boxdot^{\infty}$ appearing on the second 
tape. Furthermore, in all subroutines where we needed 
an extra tape we could use the convolution of the second tape
and this extra tape. 
When we use a separate tape to compute 
$M$, writing $\boxplus \#^M \boxdot^{\infty}$ on it, 
we can simply do it on the second tape using the symbols  
$b \choose \#$,
$b \choose \boxdot$ and
$\boxdot \choose \#$.  
For the same argument in the construction of 
$\mathrm{TM}_{x_1}$ introducing the additional tape 
for storing a copy of $v$ can be avoided.
Thus we proved the following theorem. 

\begin{thm} 
	\label{F_is_2_tape_thm}	
	Thompson's group $F$ is Cayley 2--tape linear--time 
	computable.
\end{thm}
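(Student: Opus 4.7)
The plan is to adopt the Elder--Taback normal form $\psi: L_\infty \to F$ described above and to take $S = \{x_0, x_0^{-1}, x_1, x_1^{-1}\}$ as the set of semigroup generators. Since $L_\infty$ is regular and the convolution language $L_{x_0^{-1}}$ is regular by \cite[Proposition~3.4]{ElderTabackThompson}, the right multiplication by $x_0^{-1}$ (and symmetrically by $x_0$, via swapping the components of the convolution) is an automatic function. By the result of Case, Jain, Seah and Stephan \cite{Stephan_lmcs_13}, every automatic function is computed in linear time on a position--faithful one--tape Turing machine, and a fortiori on a 2--tape Turing machine. So only the right multiplications by $x_1^{\pm 1}$ require a dedicated construction.

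For right multiplication by $x_1^{-1}$ I would follow the classification in \cite[Proposition~3.5]{ElderTabackThompson}, which distinguishes finitely many cases based on whether $s_0 = 0$, whether $wx_1^{-1}$ already is the infinite normal form of $gx_1^{-1}$, and on the relative position of the index $R = 1 + f_0 + \cdots + f_t$ produced by the cancellation cascade $x_0^{-1} x_j^{-1} = x_{j+1}^{-1} x_0^{-1}$ repeatedly applied. In the easy cases (roughly, Case~1 in the excerpt) the edit $u \mapsto v$ is local near the head or tail of $u$, and can be done by a one--tape linear--time routine that appends or deletes a bounded suffix or inserts a single $b$. The substantive case is when $s_0 \neq 0$: here one first computes a unary record $b^R$ of the critical index $R$ on the second tape by a single left--to--right sweep of $u$, appending a $b$ for every $b$ seen on the first tape and popping a $b$ for every $\#$ seen, stopping either when the input is exhausted or when the supply is exhausted mid--block (as in Algorithm~\ref{alg_comp_R}). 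Then $R$ is compared with the number $M$ of $\#$ symbols in $u$ (which equals $\max\{i_m, j_n\}$) to select a subcase, and the corresponding local edit is performed: appending $\#^{R-M} b$, deleting a subword $a\#$ after the $R$th $\#$, inserting a $b$ before the $(R+1)$th $\#$, and so on.

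For right multiplication by $x_1$ I would invert the above procedure by trial and verification: copy the input $v$ onto an auxiliary track, then for each case of \cite[Proposition~3.5]{ElderTabackThompson} run its inverse subroutine to produce a candidate $u$, run the forward $x_1^{-1}$ subroutine on $u$, and check whether the result agrees with the stored $v$; since the number of cases is constant, the total running time stays linear in $|v|$. The main obstacle is the tape budget. A naive implementation seems to want three or four tapes (the input, the unary record of $R$, the unary record of $M$, and a stored copy of $v$ for the inverse construction), but they can be folded onto two tapes by using composite symbols: one can hold $b^R$ and $\#^M$ simultaneously on the second tape over the alphabet $\{b,\boxdot\}\times\{\#,\boxdot\}$, and for the $x_1$ machine the stored copy of $v$ can be merged onto this same tape in another track. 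The careful verification that is needed is that in every subroutine each head performs only a bounded number of passes, so that the composite 2--tape machine still halts within $O(|u|)$ or $O(|v|)$ steps; this verification, case by case, completes the proof.
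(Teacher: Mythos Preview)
Your proposal is correct and follows essentially the same approach as the paper: the Elder--Taback normal form, automaticity for $x_0^{\pm 1}$ via \cite[Proposition~3.4]{ElderTabackThompson} and \cite{Stephan_lmcs_13}, the case analysis of \cite[Proposition~3.5]{ElderTabackThompson} together with the two--tape unary computation of $R$ for $x_1^{-1}$, the trial--and--verify inversion for $x_1$, and the track--convolution trick to stay within two tapes. The paper carries out precisely this programme, supplying the detailed subcase--by--subcase subroutines and linear--time bounds that you summarize in your final sentence.
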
	

\section{Discussion and Open Questions}  
\label{conclusion_section}

Theorems \ref{Z2wrF2_2_tape_thm} and \ref{F_is_2_tape_thm}  
show that the wreath product 
$\mathbb{Z}_2 \wr \mathbb{F}_2$
and Thompson's group $F$ admit 
quasigeodesic $2$--tape linear--time 
computable normal forms.  
The following questions are apparent from
these results.  

\begin{enumerate} 
	\item[1.]{Is $F$ Cayley position--faithful (one--tape) 
		linear--time computable?}	
	\item[2.]{Is $\mathbb{Z}_2 \wr \mathbb{F}_2$ 
		Cayley position--faithful (one--tape) 
		linear--time computable?}	
\end{enumerate}	
It is an open problem whether or not $F$ is automatic. The first question is a weak formulation 
of this open problem. The group 
$\mathbb{Z}_2 \wr \mathbb{F}_2$ is not automatic.
However, it is not known whether or not 
$\mathbb{Z}_2 \wr \mathbb{F}_2$ is Cayley automatic. 
The second question is a weak formulation of
the latter problem.  
Theorem \ref{Z2wrZ2_2_tape_thm} shows that
the wreath product $\mathbb{Z}_2 \wr \mathbb{Z}^2$ 
admits a $2$--tape linear--time computable normal 
form.   
However, this normal form is not 
quasigeodesic\footnote{Though this normal 
	form is not quasigeodesic, one can show that 
	there is an algorithm computing 	
	it in quadratic time.}.
It is an open problem whether or not 
$\mathbb{Z}_2 \wr \mathbb{Z}^2$ is Cayley 
automatic. As a weak formulation 
of this open problem we leave the following
question for future consideration. 
\begin{enumerate} 
	\item[3.]{Does 
		$\mathbb{Z}_2 \wr \mathbb{Z}^2$ admit 
		a quasigeodesic normal form 
		for which the right multiplication by a 
		group element is computed in polynomial time?}
\end{enumerate}	
By Theorem \ref{onetape_implies_quasigeodesic_thm}, 
if for a normal form the right multiplication 
is computed on a one--tape Turing machine in 
linear time, then it is always quasigeodesic.  
So when studying extensions of Cayley
automatic groups it sounds natural 
to restrict oneself to quasigeodesic normal forms.       
We leave the following extensions of Cayley automatic
groups for future 
consideration\footnote{Adding the class 
	of Cayley linear--time computable groups 
	refines the Venn diagram of extensions 
	of interest shown in \cite[Fig.~1]{BEK21}.}: 
\begin{itemize} 
	\item{Cayley position--faithful (one--tape) linear--time 
		computable groups;}
	\item{Cayley linear--time computable groups 
		with quasigeodesic normal form;}    
	\item{Cayley polynomial--time computable groups 
		with quasigeodesic normal form.}        	
\end{itemize}	
This paper considers only the complexity of the 
right multiplication by a group element. We leave studying 
the complexity of the left multiplication for 
future work.

\section*{Acknowledgment}
  \noindent 
  The authors thank the anonymous reviewer for useful 
  comments. The authors wish to acknowledge fruitful discussions with Murray Elder.
\bibliographystyle{plain}

\bibliography{cayleylinear}
\end{document}